\documentclass[a4paper,11pt]{article}
\usepackage[margin=1in]{geometry}
\usepackage{amsfonts,amsmath,amssymb,mathrsfs,amsthm,amscd,enumitem,multicol,multienum}
\usepackage{graphicx,multirow}
\usepackage{algorithm}
\usepackage{tabularx}
\usepackage{chemfig}
\usepackage[version=4]{mhchem}
\usepackage{bm}
\usepackage{latexsym}
\usepackage[utf8]{inputenc}
\usepackage[colorlinks=true, linkcolor=black, anchorcolor=black, citecolor=black, filecolor=black, urlcolor=black]{hyperref}
\usepackage{subcaption,float}

\newtheorem{thm}{Theorem}[section]
\newtheorem{lemma}[thm]{Lemma}

\numberwithin{equation}{section}

\renewcommand{\epsilon}{\varepsilon}

\evensidemargin 2mm
\oddsidemargin  2mm
\textwidth  155mm
\textheight 250mm
\topmargin -1.8cm

\begin{document}
	\begin{center}
		{\large Corrector estimates and numerical simulations of a system of diffusion-reaction-dissolution-precipitation model in a porous medium}

		\bigskip
		
		\bigskip
	\end{center}
	\begin{minipage}[h]{0.45\columnwidth}
		\begin{center}
			Nibedita Ghosh\\
			{\small Department of Mathematics, \\IIT Kharagpur}\\
			{\small WB 721302, India}\\
			{\small e-mail: nghosh.iitkgp@gmail.com}
		\end{center}
	\end{minipage}\hspace{1cm}
	\begin{minipage}[h]{0.45\columnwidth}
		\begin{center}
			Hari Shankar Mahato\\
			{\small Department of Mathematics, \\IIT Kharagpur}\\
			{\small WB 721302, India}\\
			{\small e-mail: hsmahato@maths.iitkgp.ac.in}
		\end{center}
	\end{minipage}

	\bigskip
	\hrule
	
	\bigskip
	
	\textbf{Abstract.} A system of diffusion-reaction equations coupled with a dissolution-precipitation model is discussed. We start by introducing a microscale model together with its homogenized version. In the present paper, we first derive the corrector result to justify the obtained theoretical results. Furthermore, we perform the numerical computations to compare the outcome of the effective model with the original heterogeneous microscale model.

	\bigskip
	{\textbf{Keywords:} } semilinear PDE-ODE system, periodic porous medium, multiscale system, averaging, corrector estimates, numerical simulation.\\

	{\textbf{AMS subject classifications 2020: }}35K57, 35B27, 76S05, 35A35, 65M60
	\bigskip
	\hrule
	
	\section{Introduction}
    Modeling crystal dissolution and precipitation through a porous medium is a topic of interest for a wide range of science fields, including chemical and tissue engineering. Examples include oil reservoir flow, groundwater flow and concrete carbonation, see for instance \cite{knabner1986free,knabner1995analysis,meier2007two,peter2008different,van1996crystal,van2004crystal} and references therein. More recently, modeling of transportation of chemical species received an increasing attention, cf. \cite{rubin1983transport,willis1987transport,peter2007scalings,peter2009multiscale}. Moreover, a porous medium is the union of the pore space and the solid parts. So we can view it in two different scales: one is the microscale which depicts the heterogeneities inside the medium but is not suited for numerical simulations, another one is the macroscale which describes the global behavior of the medium and is numerically efficient. The microscopic model serves as the starting point for the analysis. We then perform an upscaling based on two-scale convergence\cite{allaire1992homogenization,allairetwo,lukkassen2002two} and boundary unfolding operator \cite{cioranescu2006periodic,cioranescu2012periodic} in order to derive the macroscopic model. We restrict our analysis to the case of periodic homogenization only. However, in this paper, we are mainly focused on performing numerical simulations supported by the corrector result to investigate how well the upscaled equations approximate the original microscopic model.
	
	The main motivation of homogenization comes from the numerical point of view. The microscale model describes the physical and chemical phenomena at the pore scale. However, its numerical calculations seem very difficult, if not impossible. The numerical derivation of such a system will lead to a complicated analysis as the size of the step length should be chosen so small that it can catch the micro heterogeneities. That will result in enormous time consumption by the computer and a huge computational cost. Further, in real-world situations where numerous parameters are involved, the numerical computations do not seem to fit well. Therefore we require a homogeneous averaged model to perform simulations at a reasonable computational cost and at a convenient time. So basically, homogenization is a limiting procedure from a mathematical point of view. Here we look for a function $u_0$ such that $u_\epsilon\rightarrow u_0$ as $\epsilon\rightarrow 0$. Naturally, some questions arise such as
	\begin{itemize}
		\item What is the guaranty of existing such $u_0$?
		\item If it exists, in which sense the `\textit{limit}' is taken and what is the function space for the limit function?
		\item What can we say about its uniqueness?
		\item Does $u_0$ solve some limit boundary value problem?
		\item Are then the coefficients of the limit problem constant?
		\item Is $u_0$ is a good approximation of $u_\epsilon$?
		\item Finally, Do the upscaled model better suited for numerical simulations?
	\end{itemize} 
    We already answered the first five questions in our previous work \cite{ghoshmahato}. However, this paper is devoted to address the rest questions by conducting simulations and finding error bounds. 
	
	To be more specific, we now describe our main physical assumptions. We consider a pore-scale model for reactive flows. The void space is occupied by a fluid that contains two mobile species of different diffusion coefficients.  Reactions are happening at the pore space and produce an immobile species which precipitate on the grain boundary. The reverse reaction of dissolution also occurs. Several articles are exists on the derivation of corrector estimates for different classes of systems \cite{lukkassen2002two,cioranescu1999introduction,cioranescu2002periodic,cioranescu2008periodic,cioranescu2006periodic,donato2016periodic,muntean2013corrector,cioranescu2012periodic}. Numerical results in this direction can be found in \cite{van2009crystal,mahato2017numerical} and references therein. The main difficulty to derive the corrector result is to deal with the perforated porous medium.
	
   This paper is organized as follows: In Section 2, We discuss the periodic setting of the domain and the microscale model is introduced. In section 3, we first propose the technical assumptions needed for analysis. Later, we state and prove our main result on the corrector estimate. The main ingredients of the proof include integral estimates for oscillation functions and energy bounds. In section 4, we present and compare the numerical results for both the microscopic equations and the effective equations.

	\section{The model}
    Let $\Omega\subset \mathbb{R}^{n_{|n\ge 2}}$ be a periodic bounded domain such that $\Omega:=\Omega^{p}\cup\Omega^{s}$ and $\Bar{\Omega}^{s}\cap \Omega^{p}=\phi$, where $\Omega^p$ and $\Omega^s$ denotes the pore space and the union of the disconnected solid parts, respectively. $\partial\Omega$ and $\Gamma_\epsilon^*$ are the outer boundary of the domain and the union of boundaries of the solid parts. The unit representative cell $Y := (0,1)^n\subset \mathbb{R}^{n}$ is the union of a solid part $Y^s$ with boundary $\Gamma$ and the pore part $Y^p$ such that $Y=Y^{s}\cup Y^{p}$, $\bar{Y}^{s}\subset Y$ and $\bar{Y}^{s}\cap\bar{Y}^{p}=\Gamma$. The shifted set $Y^p_k$ is defined by $Y^p_k:=Y^p+\sum_{j=1}^{n}e_jk_j$, for $k=(k_1,k_2,\cdots, k_n)\in\mathbb{Z}^n$, where $e_j$ is the $j$th unit vector.
	The union of all shifted subsets $Y^p_k$ multiplied by $\epsilon$ (and confined within $\Omega$) defined the perforated porous medium $\Omega_\epsilon^p:=\cup_{k\in \mathbb{Z}^{n}}\left\{\varepsilon Y^{p}_{k}:\varepsilon Y^{p}_{k}\subset \Omega\right\}$. Similarly, $\Omega_\epsilon^s$ and $\Gamma_\epsilon^*$ denote the union of the shifted subsets $Y_k^s$ and $\Gamma_k$. The boundary of the pore space  $\partial \Omega _{\varepsilon}^{p}:=\partial \Omega \cup \Gamma^*_{\varepsilon}$. 
		\begin{figure}[h!]
		\begin{center}
			\includegraphics[width=12cm, height=5.8cm]{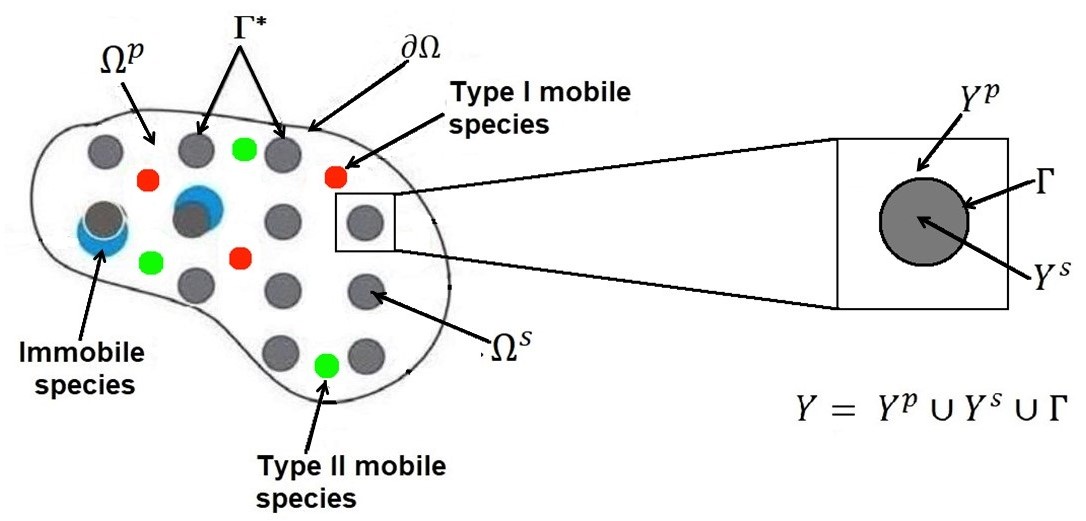}
			\caption{Mobile species in $\Omega^p$ with crystal dissolution and precipitation on $\Gamma^*$.}
		\end{center}
	\end{figure}
	We make the following geometric assumptions: $(a)$ solid parts do not touch each other. They are distributed periodically in the porous medium, $(b)$ solid parts do not touch the boundary of $Y$, $(c)$ solid parts do not touch the outer boundary $\partial\Omega$. For $T>0, S:=[0,T)$ denotes the time interval. We define the volume elements in $\Omega$ and $Y$ as $dx$ and $dy$ and the surface elements on $\Gamma_\epsilon^*, \Gamma$ by $d\sigma_x, d\sigma_y$, respectively. The characteristic function of $\Omega_\epsilon^p$ in $\Omega$ is given by
	\begin{align*}
		\chi_\epsilon(x)&\;\;=\;\;
		\left\{
		\begin{aligned}
			& 1 && \textnormal{if } x\in \Omega_\epsilon^p,\\
			& 0 && \textnormal{if } x\in \Omega\setminus\Omega_{\varepsilon}^{p}.
		\end{aligned}\right.
	\end{align*}
 According to our consideration, two mobile species $M_1$ and $M_2$ are present in $\Omega_\epsilon^p$. They are reacting reversibly and forming an immobile species $M_{12}$ which is accumulating on the interface between the pore space and the solid parts. The reaction amongst the mobile and immobile species is given by  
 \begin{equation}\label{eq2.5}
	M_1+M_2\leftrightarrow M_{12} \qquad \text{ on }\quad \Gamma_\varepsilon^*.
 \end{equation}
  We choose the forward reaction rate term as the \textit{Langmuir Isotherm} and the dissolution rate term is taken as $R_{d}(w_{\varepsilon})\in k_{d}\psi(w_{\varepsilon})$, cf. \cite{knabner1995analysis,van1996crystal,van2004crystal}, where
  \begin{align}\label{1.15}
  	\psi (c)&\;\;=\;\;
  	\left\{
  	\begin{aligned}
  		&\{0\} && \textnormal{if }c< 0,\\
  		&[0,1]&& \textnormal{if }c=0,\\
  		&\{1\} && \textnormal{if }c>0.
  	\end{aligned}\right.
  \end{align}
  We consider the same model as is described in \cite{ghoshmahato}. We denote the concentrations of $M_1, M_2$ and $M_{12}$ by $u_\epsilon, v_\epsilon$ and $w_\epsilon$, respectively. All these unknowns are dimensionless. Then the mass-balance equations for $M_1, M_2$ and $M_{12}$ are given by
	\begin{subequations}
		\begin{align}
			\frac{\partial u_\epsilon}{\partial {t}} + \nabla.(-{\bar{D}_1}\nabla u_\epsilon)&=0 \; \text{ in } S\times \Omega_\epsilon^p,	\label{eqn:M11}\\
			-{\bar{D}_1}\nabla u_\epsilon.\vec{n}&=0 \; \text{ on } S \times \partial\Omega, \label{eqn:M12}\\
			-{\bar{D}_1}\nabla u_\epsilon.\vec{n}&=\epsilon{\frac{\partial w_\epsilon}{\partial {t}}} \; \text{ on } S \times \Gamma_\epsilon^* ,\label{eqn:M13}\\
			u_\epsilon(0,x)&=u_{I\epsilon}(x) \; \text{ in } \Omega_\epsilon^p,\label{eqn:M14} 
		\end{align}	
		\begin{align}
			\frac{\partial v_\epsilon}{\partial {t}} + \nabla.(-{\bar{D}_2}\nabla v_\epsilon)&=0 \; \text{ in } S \times \Omega_\epsilon^p,	\label{eqn:M21}\\
			-{\bar{D}_2}\nabla v_\epsilon.\vec{n}&=0 \; \text{ on } S \times \partial\Omega, \label{eqn:M22}\\
			-{\bar{D}_2}\nabla v_\epsilon.\vec{n}&=\epsilon{\frac{\partial w_\epsilon}{\partial {t}}} \; \text{ on } S \times \Gamma_\epsilon^*, \label{eqn:M23}\\
			v_\epsilon(0,x)&=v_{I\epsilon}(x) \; \text{ in } \Omega_\epsilon^p, \label{eqn:M24}
		\end{align}
		\begin{align}
			\frac{\partial w_\epsilon}{\partial  {t}}&=k_d(R(u_\epsilon,v_\epsilon)-z_\epsilon) \; \text{ on } S \times \Gamma_\epsilon^*, \label{eqn:MN1} \\ 
			z_\epsilon&\in\psi(w_\epsilon) \; \text{ on } S \times \Gamma_\epsilon^*, \label{eqn:MN2}\\
			w_\epsilon(0,x)&=w_{I\epsilon}(x) \; \text{ on } \Gamma_\epsilon^*,\label{eqn:MN3}
		\end{align}
	\end{subequations}		 
	where $R : \mathbb{R}^2 \rightarrow [0, \infty)$ is defined by
	\begin{align}\label{eqn:RR}
		R(u_\epsilon, v_\epsilon)&\;\;=\;\;
		\left\{
		\begin{aligned}
			&k\frac{k_1u_\epsilon k_2 v_\epsilon}{(1+k_1u_\epsilon+k_2v_\epsilon)^2}&& \textnormal{ for } u_\epsilon>0, v_\epsilon>0,\\
			&0&&\textnormal{ otherwise }
		\end{aligned}\right.
	\end{align} 
and $k=\frac{k_f}{k_d}$. $k_1$ and $k_2$ are the Langmuir parameters for the mobile species $M_1$ and $M_2$. The rate of dissolution is given by $R_d=k_d\psi(w_\epsilon).$ Also, $k_f$ is the forward reaction rate constant for precipitation and $k_d$ denotes the dissolution rate constant. We denote this problem/model $(\ref{eqn:M11})-(\ref{eqn:MN3})$ by ($\mathcal{P}_\epsilon$).
   \section{Corrector estimate}
   Let $\theta \in [0,1]$ and $r,s\in\mathbb{R}$ be such that $\frac{1}{r}+\frac{1}{s}=1$. Suppose that $\Xi\in \{\Omega, \Omega^{p}_{\varepsilon}\}$, then $L^{r}(\Xi)$, $H^{1,r}(\Xi)$, $C^{\theta}(\overline{\Xi})$, $(\cdot,\cdot)_{\theta,r}$ and $[\cdot,\cdot]_{\theta}$ are the Lebesgue, Sobolev, H\"older, real- and complex-interpolation spaces respectively endowed with their standard norms. We define $\|f\|^r_{\Xi}=\int_{\Xi}|f|^r\,dx $ and $\|f\|^r_{(\Xi)^t}=\int_{S\times\Xi}|f|^r\,dx\,dt $. The symbols $\hookrightarrow$, $\hookrightarrow\hookrightarrow$ and $\underset{\hookrightarrow}{d}$ denote the continuous, compact and dense embeddings, respectively. We denote $L^{r}(\Xi)\hookrightarrow H^{1,s}(\Xi)^{*}$ as 
   \begin{eqnarray}
   	\langle f, v\rangle_{H^{1,s}(\Xi)^{*}\times H^{1,s}(\Xi)}&=&\langle f,v\rangle_{L^{r}(\Xi)\times L^{s}(\Xi)}:=\int_{\Xi}f v\,dx \textnormal{ for }f\in L^{r}(\Xi),\; v\in H^{1,s}(\Xi).\notag
   \end{eqnarray}
  We also introduce the $L^{r}(\Gamma_{\varepsilon}^*)-L^{s}(\Gamma_{\varepsilon}^*)$ duality as
  \begin{align*}
  	\langle \zeta_{1},\zeta_{2}\rangle:=\varepsilon\int_{\Gamma_{\varepsilon}^*}\zeta_{1}\zeta_{2}\,d\sigma_{x}\text{ for }\zeta_{1}\in L^{r}(\Gamma_{\varepsilon}^*), \; \zeta_{2}\in L^{s}(\Gamma_{\varepsilon}^*)
  \end{align*}
  and the space $L^{r}(S \times \Gamma_{\varepsilon}^*)$ is equipped with the norm
  \begin{align*}
  	\|\zeta\|^{r}_{(\Gamma_{\varepsilon}^*)^t}&\;\;:=\;\;
  	\left\{
  	\begin{aligned}
  		&\varepsilon\int_{S\times \Gamma_{\varepsilon}^*}|\zeta(t,x)|^{r}\,d\sigma_{x} \,dt&& \textnormal{for } 1\le r <\infty,\\
  		&\underset{(t,x) \in S\times \Gamma_{\varepsilon}^*}{\text{ess sup}}{|\zeta(t,x)|}&&\textnormal{for } r=\infty.
  	\end{aligned}\right.
  \end{align*} 
 For more details about the choice of the function spaces and the definition of the function spaces and embedding theorems for the problem $(\mathcal{P}_\epsilon)$ look into the work \cite{ghoshmahato}. We impose the following assumptions for the sake of analysis:\\
   $\bf A1.$ $u_{I\epsilon}(x), v_{I\epsilon}(x), w_{I\epsilon}(x) \geq 0.$ \quad $\bf A2.$ $R(u_\epsilon, v_\epsilon) = 0$  for all  $u_\epsilon\leq 0, v_\epsilon\leq0.$
   	$\bf A3.$ $R:\mathbb{R}^2\rightarrow[0,\infty)$ is Locally Lipschitz in $\mathbb{R}^2,$ as
   	\begin{align*}
   		|R(u_\epsilon^{(1)}, v_\epsilon)-R(u_\epsilon^{(2)}, v_\epsilon)|\leq L_R |u_\epsilon^{(1)}-u_\epsilon^{(2)}|,  	
   	\end{align*}
   	where $L_R>0$ is a constant. $L_R=\sup L_R(u_\epsilon, v_\epsilon)$ where $L_R(u_\epsilon,v_\epsilon)=kk_1k_2|v_\epsilon|(1+k_2v_\epsilon)^2+k_1^2u_\epsilon^{(1)}u_\epsilon^{(2)}|$.
   	$\bf A4.$ $u_{I\epsilon}, v_{I\epsilon}\in L^2(\Omega^p_\epsilon)$ and $w_{I\epsilon}\in L^\infty(\Gamma_\epsilon^*)$. 
   	$\bf A5.$ $\bar{D}_1=diag(D_1, D_1,..., D_1)$ and $\bar{D}_2=diag(D_2, D_2,..., D_2)$, where $D_1, D_2$ are positive constants.

   \subsection{Existence and Homogenization}
   \begin{thm}\label{thmex}
    Suppose the assumptions $(\bf A1.)-(\bf A5.)$ hold true, then there exists a unique positive weak solution $(u_\epsilon, v_\epsilon, w_\epsilon, z_\varepsilon)$ of $(\mathcal{P}_\varepsilon)$ which satisfies    
   \begin{subequations}
   	\begin{align}
   		& 0\leq \|u_{\epsilon}(t)\|_{\Omega_\epsilon^p}\leq M_u,  0\leq \|v_{\epsilon}(t)\|_{\Omega_\epsilon^p}\leq M_v\text{ a.e. in } S\times {\Omega_\epsilon^p}, \label{3.7a} \\
   		&0\leq \|w_{\epsilon}(t)\|_{\Gamma_\epsilon^*}\leq M_w, 0\leq z_\epsilon \leq 1 \text{ a.e. on } S\times {\Gamma_\epsilon^*}\text{ and }\label{3.7b}\\
   		&{\left\lVert u_{\epsilon}(t) \right\rVert}_{\Omega_\epsilon^p}+{D_1}{\left\lVert\nabla u_{\epsilon} \right\rVert}_{(\Omega_\epsilon^p)^t}+{\left\lVert \partial_t u_{\epsilon} \right\rVert}_{L^2(S ; H^{1,2}(\Omega_\epsilon^p)^*)} +{\left\lVert v_{\epsilon}(t) \right\rVert}_{\Omega_\epsilon^p}+{D_2}{\left\lVert\nabla v_{\epsilon} \right\rVert}_{(\Omega_\epsilon^p)^t}+\notag \\
   		&\qquad \qquad {\left\lVert \partial_t v_{\epsilon} \right\rVert}_{L^2(S ; H^{1,2}(\Omega_\epsilon^p)^*)} + {\left\lVert w_{\epsilon}(t) \right\rVert}_{\Gamma^*_\epsilon}+{\left\lVert \partial_t w_\epsilon\right\rVert}_{{(\Gamma^*_\epsilon)}^t}\leq C, \label{eqn:3.7c}
   	\end{align}   	
   \end{subequations}
   for a.e. $t\in S$, where $C$ is a generic constant independent of $\epsilon$. 
\end{thm}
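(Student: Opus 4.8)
The plan is to establish Theorem~\ref{thmex} by a Galerkin/fixed-point scheme combined with the non-negativity and boundedness argument standard for such PDE-ODE dissolution-precipitation systems (as in \cite{ghoshmahato}). First I would regularize the multivalued map $\psi$ by a Lipschitz approximation $\psi_\delta$, so that \eqref{eqn:MN1}--\eqref{eqn:MN2} becomes a genuine ODE on $\Gamma_\epsilon^*$ with right-hand side $k_d(R(u_\epsilon,v_\epsilon)-\psi_\delta(w_\epsilon))$. For fixed $(u_\epsilon,v_\epsilon)$ one solves this ODE pointwise in $x$ (parametrized by the surface variable) to obtain $w_\epsilon=w_\epsilon[u_\epsilon,v_\epsilon]$, and then feeds $\partial_t w_\epsilon$ into the Neumann data \eqref{eqn:M13}, \eqref{eqn:M23} of the two parabolic equations. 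A Schauder (or Banach, using the local Lipschitz bound $L_R$ from $\bf A3$ together with a Gronwall argument over a short time interval, then continued) fixed-point map on $(u_\epsilon,v_\epsilon)\in L^2(S\times\Omega_\epsilon^p)^2$ yields a solution of the regularized problem.

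Next I would prove the qualitative bounds \eqref{3.7a}--\eqref{3.7b}. Non-negativity of $u_\epsilon,v_\epsilon$ follows by testing \eqref{eqn:M11} with the negative part $u_\epsilon^-$: the boundary term on $\Gamma_\epsilon^*$ contributes $-\epsilon\langle\partial_t w_\epsilon,u_\epsilon^-\rangle$, which has a sign once one knows (from assumption $\bf A2$ and the structure of the ODE, with $w_{I\epsilon}\ge0$) that $\partial_t w_\epsilon\ge -k_d$ and, more carefully, that on the set where $u_\epsilon<0$ the production term $R$ vanishes; combining this with $w_\epsilon\ge0$ gives $u_\epsilon^-\equiv0$, and similarly for $v_\epsilon$. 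Once $u_\epsilon,v_\epsilon\ge0$, the ODE comparison gives $0\le z_\epsilon\le1$ and $0\le w_\epsilon\le \|w_{I\epsilon}\|_{L^\infty}+k_d\,T\,\sup R=:M_w$, uniformly in $\epsilon$ and $\delta$; here one uses that $R$ is bounded (from \eqref{eqn:RR}, $R\le k/4$). The $L^\infty$-in-time, $L^2$-in-space bounds $M_u,M_v$ for $u_\epsilon,v_\epsilon$ come from testing the parabolic equations with $u_\epsilon$ and $v_\epsilon$, using the $\epsilon$-scaled trace inequality $\epsilon\|\phi\|_{\Gamma_\epsilon^*}^2\le C(\|\phi\|_{\Omega_\epsilon^p}^2+\epsilon^2\|\nabla\phi\|_{\Omega_\epsilon^p}^2)$ to absorb the boundary term $\epsilon\langle\partial_t w_\epsilon,u_\epsilon\rangle$ into the diffusion and lower-order terms, followed by Gronwall; the $\epsilon$-independence of the constants is exactly where the scaled trace inequality is essential.

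Then the energy estimate \eqref{eqn:3.7c} is obtained in two layers: the gradient bounds $D_i\|\nabla u_\epsilon\|_{(\Omega_\epsilon^p)^t}$, $D_i\|\nabla v_\epsilon\|_{(\Omega_\epsilon^p)^t}$ drop out of the same test-function computation as above, now keeping the positive diffusion term on the left; the bound on $\partial_t w_\epsilon$ in $L^2(S\times\Gamma_\epsilon^*)$ (with the $\epsilon$-weighted norm) follows directly from \eqref{eqn:MN1} since its right-hand side is bounded by $k_d(\sup R+1)$ on the surface measure, which scales correctly; and the dual-norm bounds $\|\partial_t u_\epsilon\|_{L^2(S;H^{1,2}(\Omega_\epsilon^p)^*)}$ come by duality from the weak formulation, estimating $\langle\partial_t u_\epsilon,\phi\rangle$ by $D_1\|\nabla u_\epsilon\|\,\|\nabla\phi\|+\epsilon\|\partial_t w_\epsilon\|_{\Gamma_\epsilon^*}\|\phi\|_{\Gamma_\epsilon^*}$ and again invoking the scaled trace inequality. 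Finally I would pass $\delta\to0$: the uniform bounds give weak (and, via Aubin--Lions on the perforated domain, strong $L^2$) limits; $\psi_\delta(w_\epsilon)\to z_\epsilon\in\psi(w_\epsilon)$ is identified by a monotonicity/Minty-type argument for the graph $\psi$. Uniqueness is obtained by taking the difference of two solutions, testing with the differences, using the local Lipschitz estimate $\bf A3$ for $R$, the monotonicity of $\psi$ (so the $z_\epsilon$-difference term has the right sign), and Gronwall.

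The main obstacle I expect is keeping every constant genuinely independent of $\epsilon$ while handling the surface terms: all the coupling between the parabolic equations and the ODE lives on $\Gamma_\epsilon^*$, whose measure is $O(\epsilon^{-1})$ relative to the bulk after the $\epsilon$-weighting, so naive trace estimates would blow up. The resolution is the uniform (in $\epsilon$) scaled trace/extension inequality for periodically perforated domains, whose constant depends only on $Y^p$ and $\Gamma$; establishing or citing this carefully — together with the fact that $R$ is globally bounded even though only locally Lipschitz — is the technical heart of the argument. A secondary subtlety is that the local Lipschitz constant $L_R$ in $\bf A3$ depends on the $L^\infty$ bound of $v_\epsilon$, which is not among the stated bounds; so for the uniqueness and fixed-point steps one must either first upgrade to an $L^\infty$ bound on $u_\epsilon,v_\epsilon$ (e.g.\ by a Moser/Alikakos iteration or a maximum-principle argument using the boundedness of the data and of $R$) or restrict the Lipschitz estimate to the invariant region $[0,M_u]\times[0,M_v]$ already produced.
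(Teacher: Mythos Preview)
Your proposal is correct and matches the paper's approach: the paper does not actually prove Theorem~\ref{thmex} here but simply states ``We employ Banach's fixed point theorem to establish the existence of the weak solution. The proof is done in \cite{ghoshmahato}.'' Your outline (regularize $\psi$, solve the surface ODE, run a contraction/fixed-point argument for the coupled system, derive positivity and the $\epsilon$-uniform energy bounds via the scaled trace inequality, then pass $\delta\to0$ and get uniqueness from the Lipschitz property of $R$ and the monotonicity of $\psi$) is precisely the content of that cited work, and in fact supplies far more detail than the present paper does.
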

\begin{proof}
	We employ Banach's fixed point theorem to establish the existence of the weak solution. The proof is done in \cite{ghoshmahato}.
\end{proof}
\begin{thm}\label{thm:UT}
Under the assumptions $({\bf A1.})- ({\bf A5.})$, there exist $(u_0,v_0,w_0,z_0)\in L^2(S;H^{1,2}(\Omega))\times L^2(S;H^{1,2}(\Omega))\times L^2(S;L^2(\Omega\times\Gamma))\times L^\infty(S\times\Omega\times\Gamma)$ such that $(u_0,v_0,w_0,z_0)$ is the unique solution of the problem
\begin{subequations}
	\begin{align}
		\frac{\partial u_0}{\partial t}-\nabla.(A\nabla u_0)+P(t,x)&=0 \quad\text{in}\quad S\times\Omega, \label{eqn:M1s1} \\
		-A\nabla u_0.\vec{n}&=0 \quad\text{on}\quad S\times \partial\Omega, \label{eqn:M1s2}\\
		u_0(0,x)=&u_{I0}(x)\quad\text{in}\quad \Omega, \label{eqn:M1s3}
	\end{align}
\end{subequations}\vspace{-0.8cm}
\begin{subequations}
	\begin{align}
		\frac{\partial v_0}{\partial t}-\nabla.(B\nabla v_0)+P(t,x)&=0 \quad\text{in}\quad S\times \Omega, \label{eqn:M2S1}\\
		-B\nabla v_0.\vec{n}&=0 \quad\text{on}\quad S\times  \partial\Omega,\label{eqn:M2S2}\\
		v_0(0,x)&=v_{I0}(x)\quad\text{in}\quad\Omega,\label{eqn:M2S3}
	\end{align}
\end{subequations}\vspace{-0.8cm}
\begin{subequations}
	\begin{align}
		&\frac{\partial w_0}{\partial t}=k_d(R(u_0,v_0)-z_0) \quad\text{in}\quad S\times \Omega \times \Gamma,  \label{eqn:M3S1}\\
		&\; \text{  where  } z_0\in \psi(w_0) \quad\text{in}\quad S\times \Omega \times \Gamma, \label{eqn:M3S2}\\
		&w_0(0,x,y)=w_{I0}(x,y) \quad\text{on}\quad \Omega \times \Gamma, \label{eqn:M3S3}
	\end{align}	
\end{subequations}
satisfying the a-priori bound
\begin{align}\label{eqn:MIE}
	\|u_0\|_{(\Omega)^t}&+\|\nabla u_0\|_{(\Omega)^t}+\|\partial_t u_0\|_{L^2(S; H^{1,2}(\Omega)^*)}+\|v_0\|_{(\Omega)^t}+\|\nabla v_0\|_{(\Omega)^t}+\|\partial_t v_0\|_{L^2(S; H^{1,2}(\Omega)^*)}\notag\\
	&+\|w_0\|_{(\Omega\times\Gamma)^t}+\|\partial_t w_0\|_{(\Omega\times\Gamma)^t}+\|P\|_{(\Omega)^t}\le C<\infty,
\end{align}
where 
\begin{align*}
	P(t,x)=\int_{\Gamma}\frac{1}{|Y^p|}\frac{\partial w_0}{\partial t}\;d\sigma_y
\end{align*}
and the elliptic homogenized matrix $A=(a_{ij})_{1\le i,j\le n}$ and $B=(b_{ij})_{1\le i,j\le n}$ are defined by
\begin{align*}
	a_{ij}=\int_{Y^p}\frac{D_1}{|Y^p|}\left(\delta_{ij}+\sum_{i,j=1}^{n}\frac{\partial l_j}{\partial y_i}\right)dy, \quad b_{ij}=\int_{Y^p}\frac{D_2}{|Y^p|}\left(\delta_{ij}+\sum_{i,j=1}^{n}\frac{\partial l_j}{\partial y_i}\right)dy.
\end{align*}
Moreover, $l_j\in H^{1,2}_{per}(Y^p)$ are the solutions of the cell problems 
\begin{align}\label{eqn:CP}
	\begin{cases}
		\nabla_y.(\nabla_y l_j+e_j)=0 \quad\text{for all}\quad y\in Y^p,\\
		(\nabla_y l_j+e_j).\vec{n}=0 \quad\text{on}\quad \Gamma,\\
		y \mapsto l_j(y) \text{ is } Y-\text{periodic},
	\end{cases}
\end{align}
for $j=1,2,\cdots,n$ and for almost every $x\in\Omega$.
\end{thm}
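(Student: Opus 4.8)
The plan is to pass to the homogenization limit $\epsilon\to 0$ in the weak formulation of $(\mathcal{P}_\epsilon)$, starting from the $\epsilon$-uniform bounds of Theorem~\ref{thmex}. Since $\Omega_\epsilon^p$ is perforated, the first step is to transfer the estimates to the fixed domain $\Omega$: under the geometric assumptions $(a)$--$(c)$ there is a family of extension operators $\mathcal{P}_\epsilon:H^{1,2}(\Omega_\epsilon^p)\to H^{1,2}(\Omega)$ with operator norm bounded independently of $\epsilon$, so that $\tilde u_\epsilon:=\mathcal{P}_\epsilon u_\epsilon$ and $\tilde v_\epsilon:=\mathcal{P}_\epsilon v_\epsilon$ are bounded in $L^2(S;H^{1,2}(\Omega))$, while \eqref{eqn:3.7c} controls $\partial_t u_\epsilon,\partial_t v_\epsilon$ in $L^2(S;H^{1,2}(\Omega_\epsilon^p)^*)$. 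An Aubin--Lions type compactness argument applied to the extensions then yields a (not relabelled) subsequence with $\tilde u_\epsilon\to u_0$, $\tilde v_\epsilon\to v_0$ strongly in $L^2(S\times\Omega)$ and weakly in $L^2(S;H^{1,2}(\Omega))$; the uniform $L^\infty$-type bounds \eqref{3.7a}--\eqref{3.7b} are inherited in the limit.

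Next I would invoke two-scale convergence together with the periodic boundary unfolding operator. By the standard two-scale compactness theorem there exist $u_1,v_1\in L^2(S\times\Omega;H^{1,2}_{per}(Y^p)/\RR)$ such that $\chi_\epsilon\nabla u_\epsilon$ two-scale converges to $\chi_{Y^p}(\nabla u_0+\nabla_y u_1)$, and similarly for $v_\epsilon$. For the interface quantities, the boundary unfolding operator maps $w_\epsilon$ and $z_\epsilon$ on $\Gamma_\epsilon^*$ to functions on $S\times\Omega\times\Gamma$; using \eqref{eqn:MN1} with $0\le z_\epsilon\le 1$ and the bounds on $w_\epsilon$, one obtains $w_0\in L^2(S;L^2(\Omega\times\Gamma))$ with $\partial_t w_0\in L^2(S\times\Omega\times\Gamma)$, and $z_\epsilon$ unfolds, up to a subsequence, to some $z_0\in L^\infty(S\times\Omega\times\Gamma)$ in the weak-$*$ sense. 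The surface term $\epsilon\int_{\Gamma_\epsilon^*}\partial_t w_\epsilon\,\varphi\,d\sigma_x$ arising from \eqref{eqn:M13} is exactly the one handled by this operator, and (recalling $|Y|=1$) in the limit it produces $\int_\Omega\int_\Gamma\frac{1}{|Y^p|}\partial_t w_0\,\varphi\,d\sigma_y\,dx$, i.e.\ the source term $P(t,x)$.

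I would then test $(\mathcal{P}_\epsilon)$ with $\varphi(t,x)+\epsilon\varphi_1(t,x,x/\epsilon)$, $\varphi\in C_c^\infty(S\times\Omega)$ and $\varphi_1\in C_c^\infty(S\times\Omega;C^\infty_{per}(Y^p))$, and pass to the limit. Choosing $\varphi\equiv 0$ decouples the $y$-problem and identifies $u_1(t,x,\cdot)=\sum_j\partial_{x_j}u_0(t,x)\,l_j(\cdot)$ with $l_j$ the cell solutions \eqref{eqn:CP}; inserting this back and integrating over $Y^p$ converts the two-scale diffusion term into $-\nabla\!\cdot(A\nabla u_0)$ with $A=(a_{ij})$ as stated, and likewise $B$ for $v_0$. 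For the reaction term, strong $L^2$-convergence of $\tilde u_\epsilon,\tilde v_\epsilon$ together with the local Lipschitz bound $({\bf A3.})$ and the uniform $L^\infty$ control \eqref{3.7a} gives $R(u_\epsilon,v_\epsilon)\to R(u_0,v_0)$ in $L^2$, so the unfolded ODE \eqref{eqn:MN1} passes to \eqref{eqn:M3S1}. To close the inclusion $z_0\in\psi(w_0)$, I would observe that $\partial_t w_\epsilon$ and $w_\epsilon$ are uniformly bounded in $L^\infty$ on $\Gamma_\epsilon^*$, hence the unfolded $\hat w_\epsilon$ converges strongly in $L^2(S\times\Omega\times\Gamma)$; combining this with the weak-$*$ limit of $\hat z_\epsilon$, the graph inclusion $\hat z_\epsilon\in\psi(\hat w_\epsilon)$, and the maximal monotonicity of $\psi$ (it is $\partial j$ for a convex $j$), a Minty-type argument on $\int\hat z_\epsilon\hat w_\epsilon$ yields $z_0\in\psi(w_0)$ a.e.

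Finally, uniqueness follows by subtracting the equations for two solutions, testing the $u_0$- and $v_0$-equations with the differences, testing the ODE suitably, exploiting the monotonicity of $\psi$ (which contributes a favourable sign) and the Lipschitz continuity of $R$, and applying Gronwall's inequality; the a-priori bound \eqref{eqn:MIE} is then obtained from \eqref{eqn:3.7c} by weak and weak-$*$ lower semicontinuity of the norms under the extracted limits, together with $\|\nabla u_0\|_{(\Omega)^t}\le\liminf_{\epsilon\to 0}\|\nabla\tilde u_\epsilon\|_{(\Omega)^t}$, the analogous bounds for $v_0,w_0$, and the definition of $P$. The main obstacle I anticipate is twofold: constructing and controlling the uniform extension operators on the perforated medium so that genuine strong compactness (not merely two-scale convergence) is available for the nonlinearity $R$, and rigorously passing to the limit in the differential inclusion on the oscillating surface $\Gamma_\epsilon^*$, where the weak-$*$ limit of $z_\epsilon$ must be reconciled with $\psi(w_0)$.
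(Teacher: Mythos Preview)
Your proposal is correct and follows precisely the route indicated by the paper: the paper's own proof of this theorem consists of a single sentence referring to \cite{ghoshmahato} and naming two-scale convergence together with the boundary unfolding operator as the tools, and your sketch fleshes out exactly this strategy (extension to the fixed domain, Aubin--Lions compactness, oscillating test functions $\varphi+\epsilon\varphi_1$ to identify the cell problems, unfolding of the surface terms, and a Minty/monotonicity argument for the inclusion). There is nothing to correct or contrast.
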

\begin{proof}
	We prove the theorem by using homogenization techniques such as two-scale convergence and boundary unfolding operator in \cite{ghoshmahato}.
\end{proof}
The main result of this paper is stated in the next Theorem.
\begin{thm}\label{thm:CT}
	Suppose that 
	\begin{align}
		&(a) u_{I\epsilon}\in H^{1,2}(\Omega_\epsilon^p),\label{eqn:C1}\\
		&(b) \lim_{\epsilon\rightarrow 0} \epsilon\int_{\Gamma_\epsilon^*} w^2_{I\epsilon}d\sigma_x=\int_{\Omega}\int_{\Gamma}w^2_{I0}dxd\sigma_y.\label{eqn:C2}
	\end{align}
	Now let $u_\epsilon, v_\epsilon, w_\epsilon$ be the solution of the micro problem $\eqref{eqn:M11}-\eqref{eqn:MN3}$ and $u_0, v_0, w_0$ are the solutions of the macro problem $\eqref{eqn:M1s1}-\eqref{eqn:M3S3}$ then the following convergence holds
	\begin{align}\label{eqn:MT}
		\begin{cases}
			(i)\|u_\epsilon-u_0\|_{C([0,T];L^2(\Omega_\epsilon^p))}\rightarrow 0, \quad (ii)\|\nabla u_\epsilon-C^\epsilon\nabla u_0\|_{L^2(0,T;L^2(\Omega_\epsilon^p))}\rightarrow 0,\\
			(iii)\|v_\epsilon-v_0\|_{C([0,T];L^2(\Omega_\epsilon^p))}\rightarrow 0, \quad (iv)\|\nabla v_\epsilon-C^\epsilon\nabla v_0\|_{L^2(0,T;L^2(\Omega_\epsilon^p))}\rightarrow 0.\\
			(v)\|w_\epsilon-w_0\|_{C([0,T];L^2(\Gamma_\epsilon^*))}\rightarrow 0.
		\end{cases}
	\end{align}
\end{thm}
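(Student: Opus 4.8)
The plan is to follow the classical energy-method route to corrector estimates, adapted to the perforated domain $\Omega_\epsilon^p$ and the boundary ODE on $\Gamma_\epsilon^*$. First I would introduce the standard first-order corrector: set $u_\epsilon^{(1)}(t,x):=u_0(t,x)+\epsilon\sum_{j=1}^n l_j\!\left(\frac{x}{\epsilon}\right)\partial_{x_j}u_0(t,x)$, and similarly $v_\epsilon^{(1)}$, where $l_j$ solves the cell problem \eqref{eqn:CP}; here $C^\epsilon$ is the matrix with entries $\delta_{ij}+\partial_{y_i}l_j(x/\epsilon)$, so that $\nabla u_\epsilon^{(1)}\approx C^\epsilon\nabla u_0$ up to an $O(\epsilon)$ term involving $\nabla^2 u_0$. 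Define the error $\phi_\epsilon:=u_\epsilon-u_\epsilon^{(1)}$, $\eta_\epsilon:=v_\epsilon-v_\epsilon^{(1)}$, and $\rho_\epsilon:=w_\epsilon-w_0$. Subtracting the weak form of the macro equation \eqref{eqn:M1s1} (suitably localized via the cell problem) from the weak form of the micro equation \eqref{eqn:M11}, and testing with $\phi_\epsilon$, produces an energy identity of the form
\begin{align*}
\frac12\frac{d}{dt}\|\phi_\epsilon\|_{\Omega_\epsilon^p}^2+D_1\|\nabla\phi_\epsilon\|_{\Omega_\epsilon^p}^2 = \langle\text{oscillation terms}\rangle+\langle\text{reaction/boundary terms}\rangle,
\end{align*}
and analogously for $\eta_\epsilon$.

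Next I would estimate the right-hand side. The oscillation terms are the genuinely technical part: they involve products of the rapidly oscillating $l_j(x/\epsilon)$, $\partial_{y_i}l_j(x/\epsilon)$ against macroscopic second derivatives of $u_0$, integrated over the perforated domain, together with the discrepancy between $\int_{Y^p}(\cdot)$ and its average. I would control these by (a) the uniform bounds on $\|l_j\|_{H^{1,2}_{per}(Y^p)}$ from the cell problems, (b) the a-priori regularity of $u_0$ from Theorem \ref{thm:UT} (and, if needed, a bootstrap giving $u_0\in L^2(S;H^{2,2})$ together with $\partial_t\nabla u_0\in L^2$, which one gets from the linear parabolic structure of \eqref{eqn:M1s1}), and (c) the standard integral estimates for oscillating functions — i.e. bounds of the type $\|\epsilon\, g(x/\epsilon)\,f(x)\|_{\Omega_\epsilon^p}\le C\epsilon\|g\|_{L^2(Y^p)}\|f\|_{H^{1,2}(\Omega)}$ and the trace/extension estimates on $\Gamma_\epsilon^*$ that carry the factor $\epsilon$. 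The boundary coupling \eqref{eqn:M13} contributes $\epsilon\int_{\Gamma_\epsilon^*}\partial_t w_\epsilon\,\phi_\epsilon\,d\sigma_x$, which I would rewrite using $\partial_t w_\epsilon=k_d(R(u_\epsilon,v_\epsilon)-z_\epsilon)$, split as the corresponding macro quantity $k_d(R(u_0,v_0)-z_0)$ plus a remainder, and then use the Lipschitz bound $\bf A3$ on $R$ together with the scaled trace inequality on $\Gamma_\epsilon^*$ to absorb the $\nabla\phi_\epsilon$ part into the diffusion term and leave lower-order contributions in $\|\phi_\epsilon\|$, $\|\eta_\epsilon\|$, $\|\rho_\epsilon\|$.

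For the $w$-equation I would subtract \eqref{eqn:M3S1} from \eqref{eqn:MN1}, test with $\rho_\epsilon$ on $\Gamma_\epsilon^*$ (scaled by $\epsilon$), and use monotonicity of the graph $\psi$ to handle the term $k_d(z_\epsilon-z_0)\rho_\epsilon$ — since $z_\epsilon\in\psi(w_\epsilon)$, $z_0\in\psi(w_0)$ and $\psi$ is monotone, that term has a favorable sign and can be dropped; the remaining term $k_d(R(u_\epsilon,v_\epsilon)-R(u_0,v_0))\rho_\epsilon$ is controlled by $\bf A3$ and the scaled trace inequality, bringing in $\|\phi_\epsilon\|_{\Omega_\epsilon^p}$, $\|\nabla\phi_\epsilon\|_{\Omega_\epsilon^p}$ (small, absorbable) and $\|\rho_\epsilon\|_{\Gamma_\epsilon^*}$. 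Here the technical point is that one must carefully account for the two-scale nature of $w_0(t,x,y)$: the micro unknown $w_\epsilon$ lives on $\Gamma_\epsilon^*$ while $w_0$ lives on $\Omega\times\Gamma$, so $\rho_\epsilon$ should really be $w_\epsilon(t,x)-w_0(t,x,x/\epsilon)$, and the initial-data hypothesis \eqref{eqn:C2} is exactly what guarantees $\epsilon\int_{\Gamma_\epsilon^*}\rho_\epsilon^2(0)\,d\sigma_x\to 0$.

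Finally I would add the three energy inequalities, choose the test-function scalings so that all $\nabla\phi_\epsilon$, $\nabla\eta_\epsilon$ contributions on the right are absorbed into $D_1\|\nabla\phi_\epsilon\|^2+D_2\|\nabla\eta_\epsilon\|^2$ on the left, and arrive at
\begin{align*}
\frac{d}{dt}\Big(\|\phi_\epsilon\|_{\Omega_\epsilon^p}^2+\|\eta_\epsilon\|_{\Omega_\epsilon^p}^2+\epsilon\|\rho_\epsilon\|_{\Gamma_\epsilon^*}^2\Big)+c\Big(\|\nabla\phi_\epsilon\|_{\Omega_\epsilon^p}^2+\|\nabla\eta_\epsilon\|_{\Omega_\epsilon^p}^2\Big)\le C\Big(\|\phi_\epsilon\|_{\Omega_\epsilon^p}^2+\|\eta_\epsilon\|_{\Omega_\epsilon^p}^2+\epsilon\|\rho_\epsilon\|_{\Gamma_\epsilon^*}^2\Big)+\omega(\epsilon),
\end{align*}
where $\omega(\epsilon)\to 0$ collects all the oscillation remainders. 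Grönwall's inequality then gives $\sup_{[0,T]}(\|\phi_\epsilon\|_{\Omega_\epsilon^p}^2+\|\eta_\epsilon\|_{\Omega_\epsilon^p}^2+\epsilon\|\rho_\epsilon\|_{\Gamma_\epsilon^*}^2)+\int_0^T(\|\nabla\phi_\epsilon\|^2+\|\nabla\eta_\epsilon\|^2)\to 0$ provided the initial errors vanish, which follows from hypothesis (a) in \eqref{eqn:C1} (so $u_{I\epsilon}$ is a legitimate $H^{1,2}$ datum and $\|u_\epsilon^{(1)}(0)-u_{I\epsilon}\|\to 0$) and hypothesis (b). Undoing the corrector, $\|u_\epsilon-u_0\|\le\|\phi_\epsilon\|+\|\epsilon\sum_j l_j(x/\epsilon)\partial_{x_j}u_0\|\le\|\phi_\epsilon\|+C\epsilon\to 0$ in $C([0,T];L^2(\Omega_\epsilon^p))$, and $\|\nabla u_\epsilon-C^\epsilon\nabla u_0\|\le\|\nabla\phi_\epsilon\|+C\epsilon\|\nabla^2 u_0\|\to 0$, which are exactly (i), (ii); (iii), (iv), (v) follow identically. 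The main obstacle I anticipate is the rigorous bookkeeping of the oscillation terms on the perforated domain and on the oscillating surface $\Gamma_\epsilon^*$ — in particular obtaining the needed $\epsilon$-uniform scaled trace and extension inequalities and the extra regularity of $u_0,v_0,w_0$ — rather than any conceptual difficulty in the energy scheme itself.
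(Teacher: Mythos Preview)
Your energy-method route via the first-order ansatz $u_\epsilon^{(1)}=u_0+\epsilon\sum_j l_j(x/\epsilon)\partial_{x_j}u_0$ and a Gr\"onwall estimate on $\phi_\epsilon=u_\epsilon-u_\epsilon^{(1)}$ is a valid strategy, but it is genuinely different from what the paper does. The paper never writes down $u_\epsilon^{(1)}$ and never subtracts equations for $u$: instead it first proves that the energies $E_\epsilon(t)=\tfrac12\|u_\epsilon(t)\|^2_{\Omega_\epsilon^p}+\int_0^t\!\int D_1|\nabla u_\epsilon|^2$ converge to $E_0(t)$ in $C[0,T]$ (Lemma~\ref{lemma:EC}, via Arzel\`a--Ascoli and the alternative representation~\eqref{eqn:E3}), and then shows (Lemma~\ref{lemma:RC}) that for every \emph{smooth} $\Phi$ the functional $\rho_\epsilon(t)=\tfrac12\|u_\epsilon-\Phi\|^2_{\Omega_\epsilon^p}+\int_0^t\!\int D_1|\nabla u_\epsilon-C^\epsilon\nabla\Phi|^2$ converges in $C[0,T]$ to its macroscopic counterpart, using only the oscillating test functions $a_j^\epsilon=e_j\cdot x+\epsilon(Q_j l_j)(x/\epsilon)$ and weak--strong product limits as in \eqref{eqn:CR}--\eqref{eqn:2}. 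Parts (i)--(iv) of \eqref{eqn:MT} then follow by a density argument: choose $\Phi=\Phi_\zeta$ close to $u_0$ in $C([0,T];L^2)\cap L^2(S;H^{1,2})$ and let $\zeta\to 0$. For $w_\epsilon-w_0$ the paper does exactly what you propose (subtract, use monotonicity of $\psi$, Gr\"onwall, then~\eqref{eqn:C2}). The trade-off is clear: your approach needs the bootstrap $u_0\in L^2(S;H^{2,2})$ to make sense of the oscillation remainders $\epsilon\, l_j(x/\epsilon)\nabla^2 u_0$, plus an extension of $l_j$ across the holes, but would in principle deliver a quantitative rate in $\epsilon$; the paper's energy-convergence/density method requires no regularity on $u_0$ beyond Theorem~\ref{thm:UT} and no bookkeeping of the two-scale expansion, at the price of yielding only qualitative convergence.
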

  \subsection{Convergence of the energy}
   We define the energies associated with the mobile species $M_1$ of the micromodel and the macromodel as
   \begin{align}
   &E_\epsilon(t)=\frac{1}{2}\int_{\Omega_\epsilon^p}u^2_\epsilon(t)dx+\int_{0}^{t}\int_{\Omega_\epsilon^p}D_1\nabla u_\epsilon(\tau,x)\nabla u_\epsilon(\tau,x)dxd\tau,\label{eqn:E1}\\
   &E_0(t)=\frac{|Y^p|}{2}\int_{\Omega}u^2_0(t)dx+|Y^p|\int_{0}^{t}\int_{\Omega}A\nabla u_0(\tau,x)\nabla u_0(\tau,x)dxd\tau.\label{eqn:E2}
   \end{align}
   We choose $u_\epsilon(t,x), u_0(t,x)$ as the test functions in the equations \eqref{eqn:M11} and \eqref{eqn:M1s1} and see that the energy terms can be expressed as 
   \begin{align}
   	&E_\epsilon(t)=\frac{1}{2}\int_{\Omega_\epsilon^p}u_{I\epsilon}^2dx-\epsilon\int_{0}^{t}\int_{\Gamma_\epsilon^*}\frac{\partial w_\epsilon}{\partial t}(\tau,x)u_\epsilon(\tau,x)d\sigma_xd\tau,\label{eqn:E3}\\
   	&E_0(t)=\frac{|Y^p|}{2}\int_{\Omega}u_{I0}^2dx-\int_{0}^{t}\int_{\Omega}\int_{\Gamma}\frac{\partial w_0}{\partial t}(\tau,x,y)u_0(\tau,x)dxd\sigma_yd\tau.\label{eqn:E4}
   \end{align}
  The following convergence holds true for the energies:
   \begin{lemma}\label{lemma:EC}
   	$E_\epsilon(t)\rightarrow E_0(t)$ strongly $C([0,T])$ under the condition \eqref{eqn:C1}.
   \end{lemma}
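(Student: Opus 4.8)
The plan is to pass to the limit in the representations \eqref{eqn:E3}--\eqref{eqn:E4}, which split each energy into an initial contribution and a boundary-coupling contribution, treating the two parts separately and then addressing the uniformity in $t$. The $H^{1,2}$-regularity of the initial datum assumed in \eqref{eqn:C1}, together with the a-priori bounds of Theorem \ref{thmex}, is what legitimizes the choice of $u_\epsilon$ and $u_0$ as test functions in \eqref{eqn:M11} and \eqref{eqn:M1s1}, hence the identities \eqref{eqn:E3}--\eqref{eqn:E4} themselves. For the initial contribution I would write $\tfrac12\|u_{I\epsilon}\|^2_{\Omega_\epsilon^p}=\tfrac12\int_{\Omega}\chi_\epsilon\,\widetilde{u_{I\epsilon}}^{\,2}\,dx$ and combine the weak-$*$ convergence $\chi_\epsilon\overset{*}{\rightharpoonup}|Y^p|$ in $L^\infty(\Omega)$ with the (standing) well-preparedness of the data $\widetilde{u_{I\epsilon}}\to u_{I0}$ in $L^2(\Omega)$ to get $\tfrac12\|u_{I\epsilon}\|^2_{\Omega_\epsilon^p}\to\tfrac{|Y^p|}{2}\|u_{I0}\|^2_{\Omega}$; since this term does not depend on $t$, its convergence is trivially uniform on $[0,T]$.

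For the boundary contribution I would substitute the ODE \eqref{eqn:MN1} and rewrite
\begin{align*}
	-\epsilon\int_0^{t}\!\!\int_{\Gamma_\epsilon^*}\frac{\partial w_\epsilon}{\partial t}\,u_\epsilon\,d\sigma_x\,d\tau=-\epsilon k_d\int_0^{t}\!\!\int_{\Gamma_\epsilon^*}\bigl(R(u_\epsilon,v_\epsilon)-z_\epsilon\bigr)u_\epsilon\,d\sigma_x\,d\tau ,
\end{align*}
and then apply the boundary unfolding operator $\mathcal T^b_\epsilon$: up to a boundary-layer remainder of order $\epsilon^{1/2}$ --- controlled by Cauchy--Schwarz from the uniform trace bound in \eqref{eqn:3.7c} together with the fact that only $O(\epsilon^{-(n-1)})$ cells meet $\partial\Omega$ --- the right-hand side becomes $-k_d\int_0^{t}\!\int_{\Omega}\!\int_{\Gamma}\mathcal T^b_\epsilon\!\bigl(R(u_\epsilon,v_\epsilon)-z_\epsilon\bigr)\,\mathcal T^b_\epsilon(u_\epsilon)\,dx\,d\sigma_y\,d\tau$. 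Since unfolding commutes with composition by continuous functions, $\mathcal T^b_\epsilon R(u_\epsilon,v_\epsilon)=R(\mathcal T^b_\epsilon u_\epsilon,\mathcal T^b_\epsilon v_\epsilon)$. From Theorem \ref{thm:UT} and the uniform bounds \eqref{3.7a}--\eqref{eqn:3.7c} the required compactness follows: an Aubin--Lions argument on a uniformly bounded $H^{1,2}(\Omega)$-extension of $u_\epsilon$ (using the bound on $\partial_t u_\epsilon$ in $L^2(S;H^{1,2}(\Omega_\epsilon^p)^*)$) gives $\widetilde{u_\epsilon}\to u_0$ strongly in $L^2(S\times\Omega)$, hence the cell average $\langle\mathcal T_\epsilon u_\epsilon\rangle_{Y^p}\to u_0$ strongly; combining this with the Poincar\'e--trace inequality on the fixed cell $Y^p$ and the estimate $\|\nabla_y\mathcal T_\epsilon u_\epsilon\|=\epsilon\,\|\mathcal T_\epsilon\nabla u_\epsilon\|=O(\epsilon)$ yields $\mathcal T^b_\epsilon u_\epsilon\to u_0$ strongly in $L^2(S\times\Omega\times\Gamma)$, and likewise $\mathcal T^b_\epsilon v_\epsilon\to v_0$; moreover $\mathcal T^b_\epsilon z_\epsilon\overset{*}{\rightharpoonup}z_0$ in $L^\infty(S\times\Omega\times\Gamma)$. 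As $R$ is continuous and bounded (indeed $0\le R\le k/4$ by the AM--GM inequality applied to \eqref{eqn:RR}), $R(\mathcal T^b_\epsilon u_\epsilon,\mathcal T^b_\epsilon v_\epsilon)\to R(u_0,v_0)$ strongly in $L^2(S\times\Omega\times\Gamma)$. The factor $R(\mathcal T^b_\epsilon u_\epsilon,\mathcal T^b_\epsilon v_\epsilon)-\mathcal T^b_\epsilon z_\epsilon$ is then weakly-$*$ $L^\infty$-convergent and $\mathcal T^b_\epsilon u_\epsilon$ is strongly $L^2$-convergent, so the integral of their product over $[0,t]\times\Omega\times\Gamma$ converges, for each fixed $t$, to $\int_0^{t}\!\int_{\Omega}\!\int_{\Gamma}\bigl(R(u_0,v_0)-z_0\bigr)u_0\,dx\,d\sigma_y\,d\tau$; by \eqref{eqn:M3S1} this equals $k_d^{-1}$ times the boundary contribution of $E_0(t)$ in \eqref{eqn:E4}, so $E_\epsilon(t)\to E_0(t)$ pointwise in $t$.

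Finally, for uniformity in $t$: all integrands entering the boundary terms are bounded in $L^1$ (for the respective measures), uniformly in $\epsilon$, by \eqref{3.7a}--\eqref{eqn:3.7c} and $0\le R\le k/4$, $0\le z_\epsilon\le1$, so both $t\mapsto E_\epsilon(t)$ and $t\mapsto E_0(t)$ are Lipschitz on $[0,T]$ with an $\epsilon$-independent constant; the family $\{E_\epsilon-E_0\}$ is therefore equicontinuous, and combined with the pointwise convergence just obtained, the Arzel\`{a}--Ascoli theorem gives $E_\epsilon\to E_0$ in $C([0,T])$. (Equivalently, $\sup_{[0,T]}|E_\epsilon-E_0|$ can be estimated directly by the initial-data error plus the $L^1(S\times\Gamma_\epsilon^*)$-norm of the difference of the two boundary integrands, both shown to vanish.) I expect the main obstacle to be precisely the passage to strong convergence of the traces $\mathcal T^b_\epsilon u_\epsilon,\mathcal T^b_\epsilon v_\epsilon$ on the oscillating surface $\Gamma_\epsilon^*$, as this is what lets the nonlinear boundary coupling $(R(u_\epsilon,v_\epsilon)-z_\epsilon)u_\epsilon$ pass to the limit; it rests on the time compactness furnished by the $L^2(S;H^{1,2}(\Omega_\epsilon^p)^*)$-bound on $\partial_t u_\epsilon$ from \eqref{eqn:3.7c} and on a uniform extension operator for the perforated pore space, while the boundary-layer cells near $\partial\Omega$ are a minor technicality dispatched by the same trace estimates.
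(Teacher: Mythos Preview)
Your proposal is correct and follows essentially the same route as the paper: Arzel\`a--Ascoli for compactness in $C([0,T])$, combined with identification of the limit by passing $\epsilon\to0$ in the representation \eqref{eqn:E3}. The paper proceeds in the order compactness-then-identification and defers the latter to an external two-scale lemma (Lemma~6.2 of \cite{ghoshmahato}), whereas you reverse the order and spell out the boundary-unfolding argument explicitly; the paper also obtains only H\"older-$\tfrac12$ equicontinuity (pairing $\|\partial_t w_\epsilon\|_{L^2(S\times\Gamma_\epsilon^*)}$ with $\|u_\epsilon\|_{L^\infty(S;H^{1,2}(\Omega_\epsilon^p))}$ via Cauchy--Schwarz in time) where you claim Lipschitz, but either suffices. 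One correction on the role of \eqref{eqn:C1}: it is not needed to legitimize the choice of $u_\epsilon$ as a test function --- the bounds \eqref{eqn:3.7c} already give $u_\epsilon\in L^2(S;H^{1,2})\cap H^1(S;(H^{1,2})^*)$, which is enough for the identity \eqref{eqn:E3} --- but is rather what furnishes the uniform-in-time regularity $u_\epsilon\in L^\infty(S;H^{1,2}(\Omega_\epsilon^p))$, which the paper uses explicitly in its equicontinuity estimate and which also underpins your Lipschitz claim (the trace of $u_\epsilon(t)$ on $\Gamma_\epsilon^*$ must be controlled uniformly in $t$, not just in $L^2(S)$).
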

   \begin{proof}
    We have to show that $E_\epsilon(t)\in C[0,T]$. As a consequence of Arzela-Ascoli theorem, it is equivalent to establish the followings:\\
    $(i) |E_\epsilon(t)|\le C$ for all $t\in [0,T]$.\\
    $(ii) |E_\epsilon(t+h)-E_\epsilon(t)|\le \theta(h)$ uniformly with respect to $\epsilon$ for all $t\in[0,T-h)$ for all $h>0$ and $\theta(h)\rightarrow 0$ as $h\rightarrow 0$. \\
    We use Theorem \ref{thmex} to estimate $E_\epsilon(t)$ and obtain
    \begin{align*}
    	|E_\epsilon(t)|\le \frac{1}{2}\|u_{I\epsilon}\|^2_{\Omega_\epsilon^p}+\frac{k_d^2}{2}(1+\frac{k}{4})^2\frac{T|\Gamma||\Omega|}{|Y|}+\frac{C}{2}\left[\|u_\epsilon\|^2_{(\Omega_\epsilon^p)^t}+\|\nabla u_\epsilon\|^2_{(\Omega_\epsilon^p)^t}\right]=C.
    \end{align*}
    Hence $(i)$ proved. Next, to show $(ii)$ we consider \eqref{eqn:E3} and utilize the a-priori bounds of Theorem \ref{thmex} and deduce
    \begin{align*}
    	\left|E_\epsilon(t+h)-E_\epsilon(t)\right|&=\left|\epsilon\int_{t}^{t+h}\int_{\Gamma_\epsilon^*}\frac{\partial w_\epsilon}{\partial t}u_\epsilon d\sigma_xd\tau\right|\\
    	&\le C h^{\frac{1}{2}}\|u_\epsilon\|_{L^\infty(S;H^{1,2}(\Omega_\epsilon^p))}\left\lVert \frac{\partial w_\epsilon}{\partial t}\right\rVert_{L^2(S\times\Gamma_\epsilon^*)}=C_1h^{\frac{1}{2}}.
    \end{align*}
    Therefore upto a subsequence $E_\epsilon(t)\rightarrow \xi$ strongly in $C[0,T]$. It remains to prove that $\xi=E_0(t)$. To do so we pass the limit in $E_\epsilon(t)$ and by Lemma $6.2$ of \cite{ghoshmahato}, we get 
    \begin{align*}
    	\lim_{\epsilon\rightarrow 0}E_\epsilon(t)&=\frac{1}{2}\lim_{\epsilon\rightarrow 0}\int_{\Omega_\epsilon^p}u_{I\epsilon}^2dx-\lim_{\epsilon\rightarrow 0}\epsilon\int_{0}^{t}\int_{\Gamma_\epsilon^*}\frac{\partial w_\epsilon}{\partial t}(\tau,x)u_\epsilon(\tau,x)d\sigma_xd\tau\\
    	&=\frac{|Y^p|}{2}\int_{\Omega}u_{I0}^2dx-\int_{0}^{t}\int_{\Omega}\int_{\Gamma}\frac{\partial w_0}{\partial t}(\tau,x,y)u_0(\tau,x)dxd\sigma_yd\tau \\
    	&=E_0(t).
    \end{align*}
   \end{proof}
   \subsection{Derivation of the corrector estimate}
   The corrector matrix $C^\epsilon=(C^\epsilon_{ij})_{1\le i,j \le n}$ is defined by
   \begin{align*}
   	\begin{cases}
   		C^\epsilon_{ij}(x)=c_{ij}(\frac{x}{\epsilon}) \text{ a.e. on }\Omega_\epsilon^p,\\
   		C_{ij}(y)=\delta_{ij}+\frac{\partial l_j(y)}{\partial y_i}=\frac{\partial h^\epsilon_j}{\partial y_i}(y)\text{ a.e. on } Y^p,
   	\end{cases}
   \end{align*}
 where $l_j$ is given by \eqref{eqn:CP} and $h^\epsilon_j=e_j.y+l_j(y)\in H^{1,2}_{per}(Y^p)$ is $Y-$periodic and satisfies
 \begin{align}
 	\begin{cases}
 		\nabla_y.(\nabla_y h^\epsilon_j)=0 \quad\text{for all}\quad y\in Y^p,\\
 		\nabla_y h^\epsilon_j.\vec{n}=0 \quad\text{on}\quad \Gamma,\\
 		(h^\epsilon_j-e_j.y) \text{ is } Y-\text{periodic},
 	\end{cases}
 \end{align}
 for $j=1,2,\cdots,n$.
 \begin{lemma}
 	The sequence $\{h^\epsilon_j\}$ is weakly convergent to $h_j$ in $ H^{1,2}_{per}(Y^p)$, where $h_j$ is the solution of 
 	\begin{align}
 		\begin{cases}
 			\nabla_y.(\nabla_y h_j)=0 \quad\text{for all}\quad y\in Y^p,\\
 			\nabla_y h_j.\vec{n}=0 \quad\text{on}\quad \Gamma,\\
 			(h_j-e_j.y) \text{ is } Y-\text{periodic},
 		\end{cases}
 	\end{align}
 for $j=1,2,\cdots,n$.
 \end{lemma}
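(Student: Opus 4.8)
The plan is to treat this as a routine weak-compactness argument for the cell problems. First I would record the variational formulation defining $h^\epsilon_j$: for every $Y$-periodic test function $\varphi\in H^{1,2}_{per}(Y^p)$,
\[
\int_{Y^p}\nabla_y h^\epsilon_j\cdot\nabla_y\varphi\,dy=0,
\]
together with the side constraint that $h^\epsilon_j-e_j\cdot y$ is $Y$-periodic; the homogeneous Neumann condition on $\Gamma$ is already encoded in this identity. To make the problem well posed I would fix the additive constant, e.g.\ by requiring $\int_{Y^p}(h^\epsilon_j-e_j\cdot y)\,dy=0$. Testing with $\varphi=h^\epsilon_j-e_j\cdot y$ and invoking the Poincar\'e--Wirtinger inequality on $Y^p$ (legitimate since $Y^p$ is connected and Lipschitz) gives a bound $\|h^\epsilon_j-e_j\cdot y\|_{H^{1,2}(Y^p)}\le C$ with $C$ independent of $\epsilon$; because $e_j\cdot y$ is a fixed smooth function, this yields a uniform bound $\|h^\epsilon_j\|_{H^{1,2}_{per}(Y^p)}\le C$.

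Second, since $H^{1,2}_{per}(Y^p)$ is a Hilbert space, the bounded sequence $\{h^\epsilon_j\}$ has a subsequence converging weakly to some $h_j\in H^{1,2}_{per}(Y^p)$. I would then pass to the limit in the variational identity: the functional $u\mapsto\int_{Y^p}\nabla_y u\cdot\nabla_y\varphi\,dy$ is linear and continuous, hence weakly continuous, so $\int_{Y^p}\nabla_y h_j\cdot\nabla_y\varphi\,dy=0$ for all admissible $\varphi$. The set $\{u:\ u-e_j\cdot y\ \text{is}\ Y\text{-periodic}\}$ is a closed affine subspace of $H^{1,2}_{per}(Y^p)$, hence weakly closed, so the periodicity constraint (and therefore the implicit boundary data on $\Gamma$) is inherited by $h_j$; similarly the normalization $\int_{Y^p}(h_j-e_j\cdot y)\,dy=0$ survives because weak convergence in $H^{1,2}$ implies weak convergence in $L^2(Y^p)$. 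Thus $h_j$ is a weak solution of the limiting cell problem stated in the lemma.

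Finally, the limiting problem for $h_j$ is linear and, once the additive constant is fixed, uniquely solvable (Lax--Milgram on $H^{1,2}_{per}(Y^p)/\mathbb{R}$, with coercivity coming again from Poincar\'e--Wirtinger). Uniqueness forces every weakly convergent subsequence of $\{h^\epsilon_j\}$ to share the same limit $h_j$, so the subsequence principle upgrades this to weak convergence of the entire sequence. The only genuinely delicate point is the identification of the limit: one must fix the additive-constant degeneracy consistently for the whole $\epsilon$-family and for the limit problem — equivalently, work in $H^{1,2}_{per}(Y^p)/\mathbb{R}$ throughout — and check that the periodicity/boundary data survive the weak passage to the limit, both of which are handled by the closed-affine-subspace observation above. (Note that, since $h^\epsilon_j=e_j\cdot y+l_j$ does not in fact depend on $\epsilon$, the convergence reduces to the equality $h_j=h^\epsilon_j$; the argument above is the robust way to obtain the statement without relying on that coincidence.)
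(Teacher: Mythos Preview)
Your proposal is correct and follows the same core idea as the paper: a uniform $H^{1,2}_{per}(Y^p)$ bound followed by weak compactness. In fact your argument is strictly more complete than the paper's own proof, which stops after extracting a weakly convergent subsequence and never verifies that the limit $h_j$ actually solves the stated boundary-value problem, nor upgrades subsequence convergence to convergence of the whole sequence. Your additional steps --- passing to the limit in the variational identity, checking that the affine periodicity constraint is weakly closed, and invoking uniqueness of the limit problem --- fill precisely those gaps. Your parenthetical observation that $h^\epsilon_j=e_j\cdot y+l_j$ is in fact independent of $\epsilon$ (so the lemma is a tautology as written) is also apt; the paper uses the $\epsilon$-superscript purely as notation and does not address this.
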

 \begin{proof}
 	We can see that $\|h^\epsilon_j\|_{H^{1,2}_{per}(Y^p)}\le C$, where $C$ is a constant independent of $\epsilon$. Therefore, we can extract a subsequence(denoted by the same notation) such that $h^\epsilon_j\rightharpoonup h_j$ weakly in $ H^{1,2}_{per}(Y^p)$.
 \end{proof}
 Next, we set
 \begin{align*}
 	a^\epsilon_j(x)=e_j.x+\epsilon (Q_j(l_j))(\frac{x}{\epsilon}),
 \end{align*}
 where the extension operators $Q_j$ are defined in Lemma $3.1$ of \cite{donato2004homogenization} and $l_j$ are the solutions of the cell problems. Then we have by standard arguments (see for instance, \cite{cioranescu1979homogenization})
 \begin{align}\label{eqn:CR}
 	\begin{cases}
 		a^\epsilon_j \rightharpoonup e_j.x \text{ weakly in } H^{1,2}(\Omega),\\
 		a^\epsilon_j \rightarrow e_j.x \text{ strongly in } L^2(\Omega),
 	\end{cases}
 \end{align}
 due to the periodicity of these functions. We can derive that $C^\epsilon\rightharpoonup I$ weakly in $(L^2(\Omega))^{n\times n}$. Now, we denote
 \begin{align}\label{eqn:E}
 	\eta_i^\epsilon=\left(D_1\frac{\partial a_j^\epsilon}{\partial x_1},D_1\frac{\partial a_j^\epsilon}{\partial x_2},\cdots, D_1\frac{\partial a_j^\epsilon}{\partial x_n}\right)=D_1\nabla a^\epsilon_j.
 \end{align}
 \begin{lemma}\label{lemma:MC}
 	Let $\eta_i^\epsilon$ be as \eqref{eqn:E} and   $\tilde{\eta_j^\epsilon}$ denotes the zero extension to the whole domain $\Omega$. Then         $\tilde{\eta_j^\epsilon} \rightharpoonup \int_{Y^p} D_1\nabla h_jdy=A|Y^p|e_j$ weakly in $(L^2(\Omega))^n$.
 \end{lemma}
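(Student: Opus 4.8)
The plan is to identify the weak limit of $\widetilde{\eta_j^\epsilon}$ by testing against smooth functions and exploiting the periodicity of the correctors together with the structure of the cell problem. First I would recall that $\eta_j^\epsilon = D_1\nabla a_j^\epsilon = D_1(e_j + \nabla_y(Q_j l_j)(x/\epsilon))$ on $\Omega_\epsilon^p$, so that $\widetilde{\eta_j^\epsilon}$ is the $\epsilon$-periodic extension of the $Y$-periodic field $y\mapsto D_1\chi_{Y^p}(y)(e_j + \nabla_y l_j(y)) = D_1\chi_{Y^p}(y)\nabla_y h_j(y)$, up to the error coming from the extension operator $Q_j$ which, by Lemma 3.1 of \cite{donato2004homogenization}, is controlled in $H^{1,2}$ uniformly in $\epsilon$. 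By the standard mean-value property of rapidly oscillating periodic functions (the $L^2$-weak version, as in \cite{cioranescu1999introduction}), such a sequence converges weakly in $(L^2(\Omega))^n$ to its cell average $\int_{Y^p} D_1\nabla_y h_j\,dy$.

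The second step is to verify that this cell average equals $A|Y^p|e_j$. Writing $h_j = e_j\cdot y + l_j$ and using the definition $a_{ij} = \frac{D_1}{|Y^p|}\int_{Y^p}\bigl(\delta_{ij} + \partial l_j/\partial y_i\bigr)dy$, I would compute the $i$-th component: $\int_{Y^p} D_1\,(\partial h_j/\partial y_i)\,dy = \int_{Y^p} D_1(\delta_{ij} + \partial l_j/\partial y_i)\,dy = a_{ij}|Y^p| = (A|Y^p|e_j)_i$. This is a direct matching of definitions once the cell-problem solution $l_j$ is in place; the only subtlety is confirming that the weak limit of $h_j^\epsilon$ (from the previous lemma) is consistent with the $l_j$ appearing in the homogenized matrix, which follows because both solve the same cell problem with periodic boundary conditions and hence agree up to an additive constant that drops out under differentiation.

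For the weak-convergence argument itself I would take an arbitrary $\varphi\in C_c^\infty(\Omega)$ (or $L^2(\Omega)$ by density), write $\int_\Omega \widetilde{\eta_j^\epsilon}\varphi\,dx = \int_{\Omega_\epsilon^p} D_1\nabla a_j^\epsilon\,\varphi\,dx$, split $\nabla a_j^\epsilon = e_j + \epsilon\nabla(Q_j l_j)(x/\epsilon) = e_j + (\nabla_y Q_j l_j)(x/\epsilon)$, and apply the periodic averaging lemma to the oscillating factor $\chi_{Y^p}(x/\epsilon)\,(\nabla_y h_j)(x/\epsilon)$, which converges weakly-$\ast$ to $|Y^p|^{-1}\int_{Y^p}\nabla_y h_j\,dy$ times $\chi$... more precisely its integral against $\varphi$ tends to $\int_\Omega \varphi\,\int_{Y^p}D_1\nabla_y h_j\,dy$. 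The extension operator contributes a term bounded by $C\|Q_j l_j - l_j\|$-type quantities that vanish because $Q_j$ acts as the identity on the bulk of $Y^p$ and the boundary layer has measure $O(\epsilon)$.

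The main obstacle I anticipate is the bookkeeping around the extension operator $Q_j$ and the perforated geometry: one must ensure that replacing $l_j$ by $Q_j l_j$ (needed so that $a_j^\epsilon$ is globally defined on $\Omega$ rather than only on $\Omega_\epsilon^p$) does not alter the weak limit, and that the zero-extension $\widetilde{(\cdot)}$ interacts correctly with the weak-$L^2$ convergence — i.e. that the characteristic function $\chi_\epsilon$ and the oscillating gradient converge jointly, not just separately. This is handled by the uniform $H^{1,2}$-bound on $Q_j l_j$ from \cite{donato2004homogenization} and the fact that the weak limit of a product of a bounded $L^2$ oscillating sequence with a fixed periodic characteristic function is the product of the respective averages when one factor is the periodic extension of a fixed cell function. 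Everything else is routine, so I would keep the write-up short, citing \cite{cioranescu1999introduction,cioranescu1979homogenization,donato2004homogenization} for the periodic-averaging and extension facts and reducing the computation to the two-line identity $\int_{Y^p}D_1\nabla_y h_j\,dy = A|Y^p|e_j$.
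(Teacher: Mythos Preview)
Your proposal is correct and follows essentially the same route as the paper's proof: both recognise that $\tilde{\eta_j^\epsilon}$ is (up to the boundary layer near $\partial\Omega$) the $\epsilon$-scaled periodic extension of the fixed $Y$-periodic field $D_1\chi_{Y^p}\nabla_y h_j$, and then invoke the standard weak-convergence-to-the-mean for rapidly oscillating periodic functions. The paper compresses this into three lines and outsources the identification of the limit to Lemma~3.4 of \cite{donato2004homogenization}, whereas you spell out the periodic-averaging and the algebraic identity $\int_{Y^p}D_1\nabla_y h_j\,dy = A|Y^p|e_j$ directly; note also that your concern about the extension operator $Q_j$ is moot here, since the zero extension from $\Omega_\epsilon^p$ only involves $\nabla a_j^\epsilon$ on $\Omega_\epsilon^p$, where $Q_j l_j = l_j$ by construction and no error term appears.
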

 \begin{proof}
 	Since $\eta_j^\epsilon$ is bounded in $L^2(\Omega_\epsilon^p)$ and $\tilde{\eta_j^\epsilon}=D_1\tilde{h_j^\epsilon}$ and $D_1\tilde{h_j^\epsilon}$ is $Y-$ periodic so $\tilde{\eta_j^\epsilon} \rightharpoonup \mathcal{M}_Y(D_1\tilde{h_j^\epsilon})$ weakly in $(L^2(\Omega))^n$. We now apply Lemma $3.4$ of \cite{donato2004homogenization} and conclude that
 	\begin{align*}
 		\mathcal{M}_Y(D_1\tilde{h_j^\epsilon})\rightharpoonup \mathcal{M}_Y(D_1\tilde{h_j}) \text{ weakly in } (L^2(\Omega))^n.
 	\end{align*}
 \end{proof}
 Moreover, it can be seen that $\eta_j^\epsilon$ satisfies the system
\begin{align}\label{eqn:2}
	\begin{cases}
		\nabla.\eta_j^\epsilon=0\text{ in } \Omega_\epsilon^p,\\
		\eta_j^\epsilon.\vec{n}=0 \text{ on } \Gamma_\epsilon^*.
	\end{cases}
\end{align}
  \begin{lemma}\label{lemma:RC}
  	Under the assumptions of Theorem \ref{thm:CT}, for any $\Phi\in C^\infty([0,T];\mathcal{D}(\Omega))$, set
  	\begin{align*}
  		\rho_\epsilon(t)=\frac{1}{2}\|u_\epsilon(t)-\Phi(t)\|^2_{\Omega_\epsilon^p}+\int_{0}^{t}\int_{\Omega_\epsilon^p}D_1(\nabla u_\epsilon-C^\epsilon\nabla \Phi)(\tau,x)(\nabla u_\epsilon-C^\epsilon\nabla \Phi)(\tau,x)dxd\tau.
  	\end{align*}
  Then $\rho_\epsilon(t)\rightarrow \rho_0(t)$ strongly $C[0,T]$, where
  \begin{align*}
  	\rho_0(t)=\frac{|Y^p|}{2}\|u_0(t)-\Phi(t)\|^2_{\Omega}+\int_{0}^{t}\int_{\Omega}|Y^p|A(\nabla u_0-\nabla \Phi)(\tau,x)(\nabla u_0-\nabla \Phi)(\tau,x)dxd\tau.
  \end{align*}
\end{lemma}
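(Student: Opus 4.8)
The plan is to expand the quadratic expression defining $\rho_\epsilon(t)$ and analyze each resulting term separately, exploiting the convergence of the energy from Lemma \ref{lemma:EC} for the "hard" part and elementary two-scale/weak convergence arguments for the cross terms. Writing out the square, we have
\begin{align*}
	\rho_\epsilon(t) &= E_\epsilon(t) - \int_{\Omega_\epsilon^p} u_\epsilon(t)\Phi(t)\,dx + \frac{1}{2}\int_{\Omega_\epsilon^p}\Phi^2(t)\,dx \\
	&\quad - 2\int_0^t\int_{\Omega_\epsilon^p} D_1 \nabla u_\epsilon \cdot C^\epsilon\nabla\Phi\,dx\,d\tau + \int_0^t\int_{\Omega_\epsilon^p} D_1 (C^\epsilon\nabla\Phi)\cdot(C^\epsilon\nabla\Phi)\,dx\,d\tau,
\end{align*}
where $E_\epsilon(t)$ is exactly the micro-energy from \eqref{eqn:E1}. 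By Lemma \ref{lemma:EC}, $E_\epsilon(t) \to E_0(t)$ strongly in $C[0,T]$, and $E_0(t)$ supplies precisely the terms $\frac{|Y^p|}{2}\int_\Omega u_0^2(t)\,dx + |Y^p|\int_0^t\int_\Omega A\nabla u_0\cdot\nabla u_0$ appearing in $\rho_0(t)$. So the whole difficulty is reduced to passing to the limit in the four remaining terms and checking that their limits assemble correctly.

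First I would handle the zeroth-order terms. Since $\Phi\in C^\infty([0,T];\mathcal D(\Omega))$ is fixed and smooth, $\chi_\epsilon\Phi^2 \rightharpoonup |Y^p|\,\Phi^2$ weakly (indeed the mean-value property gives strong $L^2$ convergence of $\chi_\epsilon$ against the fixed test function), so $\int_{\Omega_\epsilon^p}\Phi^2(t)\,dx \to |Y^p|\int_\Omega\Phi^2(t)\,dx$ uniformly in $t$. For the product term $\int_{\Omega_\epsilon^p} u_\epsilon(t)\Phi(t)\,dx$, I would use the strong convergence of $\tilde u_\epsilon$ to $|Y^p|u_0$ (or the two-scale convergence $u_\epsilon \to u_0$) established in \cite{ghoshmahato} — uniformity in $t$ follows from the $C([0,T];L^2)$-type bound in Theorem \ref{thmex} together with the equicontinuity already exploited in Lemma \ref{lemma:EC}. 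This yields $\int_{\Omega_\epsilon^p} u_\epsilon\Phi \to |Y^p|\int_\Omega u_0\Phi$ in $C[0,T]$.

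The cross term $\int_0^t\int_{\Omega_\epsilon^p} D_1\nabla u_\epsilon\cdot C^\epsilon\nabla\Phi$ is where the corrector structure is used. Here I would rewrite $D_1 C^\epsilon\nabla\Phi$ componentwise in terms of the functions $\eta_j^\epsilon = D_1\nabla a_j^\epsilon$ from \eqref{eqn:E}: using $C^\epsilon_{ij}(x) = c_{ij}(x/\epsilon)$ and the definition of $a_j^\epsilon$, the vector field $D_1(C^\epsilon\nabla\Phi)$ equals, up to terms that vanish strongly in $L^2$ (controlled by $\epsilon\|Q_j(l_j)\|$), the combination $\sum_j \partial_{x_j}\Phi\,\eta_j^\epsilon$. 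Then I would integrate by parts, moving the divergence off $u_\epsilon$; since $\nabla\cdot\eta_j^\epsilon = 0$ in $\Omega_\epsilon^p$ and $\eta_j^\epsilon\cdot\vec n = 0$ on $\Gamma_\epsilon^*$ by \eqref{eqn:2}, the only surviving contributions involve $u_\epsilon$ paired against $\eta_j^\epsilon$ times derivatives of the smooth $\Phi$. Invoking Lemma \ref{lemma:MC} ($\tilde\eta_j^\epsilon \rightharpoonup A|Y^p|e_j$ weakly in $(L^2(\Omega))^n$) against the strongly convergent $u_\epsilon \to |Y^p|u_0$ and reversing the integration by parts on the limit gives $\to |Y^p|\int_0^t\int_\Omega A\nabla u_0\cdot\nabla\Phi$. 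The last term $\int_0^t\int_{\Omega_\epsilon^p} D_1(C^\epsilon\nabla\Phi)\cdot(C^\epsilon\nabla\Phi)$ is a pure oscillating-coefficient integral with a fixed smooth $\Phi$, so it converges to $|Y^p|\int_0^t\int_\Omega A\nabla\Phi\cdot\nabla\Phi$ by the classical weak convergence $\mathcal M_Y(D_1 C^\top C) \to$ (the relevant averaged tensor), or more directly by noting $D_1 c_{ik}c_{jk}$ is $Y$-periodic and its mean is $a_{ij}$ after using the symmetry and the cell equation. Collecting all five limits reproduces exactly $\rho_0(t)$.

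The main obstacle I anticipate is not any single limit passage but ensuring that every convergence is \emph{uniform in $t\in[0,T]$}, i.e. in $C[0,T]$ rather than merely pointwise or in $L^2(S)$. For the terms that are time integrals this is automatic once the integrand is bounded in $L^1(S)$ uniformly in $\epsilon$ (Theorem \ref{thmex}); for the instantaneous terms at time $t$ — namely $\int_{\Omega_\epsilon^p}u_\epsilon(t)\Phi(t)$ — one needs an Arzelà–Ascoli argument exactly parallel to parts $(i)$–$(ii)$ of the proof of Lemma \ref{lemma:EC}, using the $H^{1,2}(\Omega_\epsilon^p)^*$ bound on $\partial_t u_\epsilon$ to get $\epsilon$-uniform equicontinuity of $t\mapsto\int_{\Omega_\epsilon^p}u_\epsilon(t)\Phi(t)\,dx$, together with the $\epsilon$-uniform bound, so that the whole sequence $\rho_\epsilon$ is precompact in $C[0,T]$ and its unique limit must be $\rho_0$. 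A secondary technical point is the careful justification that the $\epsilon(Q_j(l_j))(x/\epsilon)$ correction in $a_j^\epsilon$ contributes nothing in the limit, which follows from the $L^2(\Omega)$-boundedness of $Q_j(l_j)(\cdot/\epsilon)$ and the explicit $\epsilon$ factor, as recorded in \eqref{eqn:CR}.
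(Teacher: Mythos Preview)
Your plan is essentially the paper's proof: they write $\rho_\epsilon=\rho_\epsilon^1+\rho_\epsilon^2-\rho_\epsilon^3$ with $\rho_\epsilon^1=E_\epsilon$ handled by Lemma~\ref{lemma:EC}, pass to the limit pointwise in the pure-$\Phi$ term $\rho_\epsilon^2$ and the cross term $\rho_\epsilon^3$, and then upgrade to $C[0,T]$ via the compact embeddings $H^{1,\infty}(0,T)\hookrightarrow\hookrightarrow C[0,T]$ and $H^{1,2}(0,T)\hookrightarrow\hookrightarrow C[0,T]$ --- which is exactly your Arzel\`a--Ascoli step, phrased differently. The only tactical difference is in the gradient cross term $\int_0^t\!\int_{\Omega_\epsilon^p} D_1\nabla u_\epsilon\cdot C^\epsilon\nabla\Phi$: you integrate by parts directly using $\nabla\cdot\eta_j^\epsilon=0$ and $\eta_j^\epsilon\cdot\vec n=0$, then pair the weakly convergent $\tilde\eta_j^\epsilon$ against a strongly convergent $u_\epsilon$; the paper instead writes $\nabla a_i^\epsilon\Phi_{x_i}=\nabla(a_i^\epsilon\Phi_{x_i})-a_i^\epsilon\nabla\Phi_{x_i}$ and inserts $a_i^\epsilon\Phi_{x_i}$ as a test function in the weak form of \eqref{eqn:M11}. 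Both routes are standard and give the same limit; yours is slightly more direct but needs the strong $L^2$ compactness of $u_\epsilon$, while the paper's oscillating-test-function device trades that for the PDE structure.

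One slip to fix: the zero extension $\tilde u_\epsilon=\chi_\epsilon u_\epsilon$ does \emph{not} converge strongly (it oscillates with $\chi_\epsilon$), and ``$u_\epsilon\to|Y^p|u_0$'' is not right. What you actually use is that the $H^{1,2}$-extension $Pu_\epsilon\to u_0$ strongly in $L^2(S\times\Omega)$ (Aubin--Lions from the bounds of Theorem~\ref{thmex}); the factor $|Y^p|$ then enters only through the weak limits of $\chi_\epsilon$ and of $\tilde\eta_j^\epsilon$ (Lemma~\ref{lemma:MC}), not through $u_\epsilon$. With that correction your argument goes through.
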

   \begin{proof}
   	We can write $\rho_\epsilon(t)$ as 
   	\begin{align*}
   		\rho_\epsilon(t)=\rho_\epsilon^1(t)+\rho_\epsilon^2(t)-\rho_\epsilon^3(t),
   	\end{align*}
   where 
   \begin{align*}
   		&\rho_\epsilon^1(t)=\frac{1}{2}\|u_\epsilon(t)\|^2_{\Omega_\epsilon^p}+\int_{0}^{t}\int_{\Omega_\epsilon^p}D_1\nabla u_\epsilon(\tau,x)\nabla u_\epsilon(\tau,x) dxd\tau,\\
     	&\rho_\epsilon^2(t)=\frac{1}{2}\|\Phi(t)\|^2_{\Omega_\epsilon^p}+\int_{0}^{t}\int_{\Omega_\epsilon^p}D_1(C^\epsilon \nabla\Phi)(C^\epsilon \nabla\Phi) dxd\tau,\\
     	&\rho_\epsilon^3(t)=\int_{\Omega_\epsilon^p}u_\epsilon(t)\Phi(t)dx+\int_{0}^{t}\int_{\Omega_\epsilon^p}D_1\nabla u_\epsilon(C^\epsilon\nabla \Phi)dxd\tau+\int_{0}^{t}\int_{\Omega_\epsilon^p}D_1(C^\epsilon\nabla\Phi)\nabla u_\epsilon dx d\tau.
   \end{align*}
  We now pass the limits to each term separately. Lemma \ref{lemma:EC} implies
  \begin{align}\label{eqn:R1}
  	\rho_\epsilon^1(t)=E_\epsilon(t)\rightarrow E_0(t)=\frac{|Y^p|}{2}\int_{\Omega}u_0^2(t)dx+|Y^p|\int_{0}^{t}\int_{\Omega}A\nabla u_0(\tau,x)\nabla u_0(\tau,x)dxd\tau \text{ in } C[0,T].
  \end{align}
 We first establish the point-wise convergence of $\rho_\epsilon^2$. So basically we need to show the point-wise convergent of the second term of $\rho_\epsilon^2$. That means, we have to calculate
 \begin{align*}
 	&\lim_{\epsilon\rightarrow 0}\int_{0}^{t}\int_{\Omega_\epsilon^p}D_1(C^\epsilon \nabla\Phi)(\tau,x)(C^\epsilon \nabla\Phi)(\tau,x) dxd\tau=\sum_{i,j=1}^{n}\lim_{\epsilon\rightarrow 0}\int_{0}^{t}\int_{\Omega}\chi(\frac{x}{\epsilon})\eta_i^\epsilon\nabla a_j^\epsilon\Phi_{x_i}\Phi_{x_j}dxd\tau\\
 	&=\sum_{i,j=1}^{n}\lim_{\epsilon\rightarrow 0}\int_{0}^{t}\int_{\Omega}\chi(\frac{x}{\epsilon})\eta_i^\epsilon\nabla (a_j^\epsilon\Phi_{x_i}\Phi_{x_j})dxd\tau-\sum_{i,j=1}^{n}\lim_{\epsilon\rightarrow 0}\int_{0}^{t}\int_{\Omega}\chi(\frac{x}{\epsilon})\eta_i^\epsilon a_j^\epsilon\nabla(\Phi_{x_i}\Phi_{x_j})dxd\tau.
 \end{align*}
 Using \eqref{eqn:CR}, \eqref{eqn:2} and Lemma \ref{lemma:MC}, we obtain
 \begin{align}\label{eqn:1}
 	\rho_\epsilon^2\rightarrow \frac{|Y^p|}{2}\|\Phi(t)\|^2_{\Omega}+|Y^p|\int_{0}^{t}\int_{\Omega}A\nabla\Phi\nabla\Phi dxd\tau, \text{ for any } t\in [0,T].
 \end{align} 
 Next we need to show $\rho_\epsilon^2$ belongs to a compact set in $C[0,T]$. Due to the compact injection $H^{1,\infty}(0,T)\hookrightarrow\hookrightarrow C([0,T])$, it is equivalent to prove
 \begin{align*}
 	\|\rho_\epsilon^2\|_{L^\infty(0,T)}+\left\lVert\partial_t (\rho_\epsilon^2)\right\rVert_{L^\infty(0,T)}\le C,
 \end{align*}
 where $C$ is a constant independent of $\epsilon$. This follows immediately due to the weak convergence of $C^\epsilon$ and the fact that $\Phi$ is regular and independent of $\epsilon$. This in combination with \eqref{eqn:1} implies that
 \begin{align}\label{eqn:3}
 		\rho_\epsilon^2\rightarrow \frac{|Y^p|}{2}\|\Phi(t)\|^2_{\Omega}+|Y^p|\int_{0}^{t}\int_{\Omega}A\nabla\Phi\nabla\Phi dxd\tau \text{ in } C[0,T].
 \end{align}
 We proceed similarly for $\rho_\epsilon^3$. Mainly, we have to show the point-wise convergence of the second and third term of $\rho_\epsilon^3$. We start with the second term and get
 \begin{align*}
 	&\lim_{\epsilon\rightarrow 0} \int_{0}^{t}\int_{\Omega_\epsilon^p}D_1\nabla u_\epsilon(C^\epsilon\nabla \Phi)dxd\tau=\sum_{i=1}^{n}\lim_{\epsilon\rightarrow 0}\int_{0}^{t}\int_{\Omega}\chi(\frac{x}{\epsilon})D_1\nabla u_\epsilon\nabla a_i^\epsilon\Phi_{x_i}dxd\tau \\
 	&=\sum_{i=1}^{n}\lim_{\epsilon\rightarrow 0}\int_{0}^{t}\int_{\Omega}\chi(\frac{x}{\epsilon})D_1\nabla u_\epsilon\nabla(a_i^\epsilon\Phi_{x_i})dxd\tau-\lim_{\epsilon\rightarrow 0}\sum_{i=1}^{n}\int_{0}^{t}\int_{\Omega}\chi(\frac{x}{\epsilon})D_1\nabla u_\epsilon a_i^\epsilon\nabla\Phi_{x_i}dxd\tau.
 \end{align*}
 We now choose $a_i^\epsilon\Phi_{x_i}$ as a test function in the variational formulation of  \eqref{eqn:M11} and pass the homogenization limit to zero. Consequently, using the convergence results of Lemma 6.2 of \cite{ghoshmahato} and \eqref{eqn:CR}, we obtain
 \begin{align*}
 	\int_{0}^{t}\int_{\Omega_\epsilon^p}D_1\nabla u_\epsilon(C^\epsilon\nabla \Phi)dxd\tau\rightarrow |Y^p|\int_{0}^{t}\int_{\Omega}A\nabla u_0\nabla\Phi dxd\tau.
 \end{align*}
  We do the same calculation for the third term of $\rho_\epsilon^3$ just like we did for the second term of $\rho_\epsilon^2$ and combining all the terms we are led to
 \begin{align*}
 	\rho_\epsilon^3\rightarrow |Y^p|\int_{\Omega}u_0\Phi dx+|Y^p|\int_{0}^{t}\int_{\Omega}A\nabla u_0\nabla\Phi dxd\tau+|Y^p|\int_{0}^{t}\int_{\Omega}A\nabla\Phi\nabla u_0dxd\tau,
 \end{align*} 
 for any $t\in[0,T]$. We now prove that $\rho_\epsilon^3$ is bounded in $H^{1,2}(0,T)$, that is
 \begin{align*}
 	\|\rho_\epsilon^3\|_{L^\infty(0,T)}+\left\lVert\partial_t (\rho_\epsilon^3)\right\rVert_{L^2(0,T)}\le C.
 \end{align*}
 This comes from the the a-priori estimates of Theorem \ref{thmex}, weak convergence of $C^\epsilon$ and the regularity of $\Phi$. Further, due to the compact injection $H^{1,2}(0,T)\hookrightarrow\hookrightarrow C[0,T]$, we can write
 \begin{align}\label{eqn:R3}
 	\rho_\epsilon^3\rightarrow |Y^p|\int_{\Omega}u_0\Phi dx+|Y^p|\int_{0}^{t}\int_{\Omega}A\nabla u_0\nabla\Phi dxd\tau+|Y^p|\int_{0}^{t}\int_{\Omega}A\nabla\Phi\nabla u_0dxd\tau\text{ in } C[0,T].
 \end{align}
 Finally recalling that $\rho_\epsilon(t)=\rho_\epsilon^1(t)+\rho_\epsilon^2(t)-\rho_\epsilon^3(t)$ and  the convergences \eqref{eqn:R1}, \eqref{eqn:3} and \eqref{eqn:R3} gives the desired result.
\end{proof}
  \begin{proof}[\textit{\textbf{Proof of Theorem \ref{thm:CT}}}]
  	As $u_0\in L^2(S;H^{1,2}(\Omega))\cap C([0,T];L^2(\Omega))$, so by density arguments(see \cite{cioranescu1999introduction}), for any $\zeta>0$, there exists $\Phi_\zeta\in C^\infty([0,T];D(\Omega))$ such that
  	\begin{align}\label{eqn:BE}
  		\begin{cases}
  			&(i)\|u_0-\Phi_\zeta\|^2_{C([0,T];L^2(\Omega))}\le\zeta, \\
  			&(ii)\|\nabla u_0-\nabla\Phi_\zeta\|^2_{L^2(S;L^2(\Omega))}\le\zeta.
  		\end{cases}
  	\end{align}
   We now use \eqref{eqn:BE} and get
   \begin{align}
   	\|u_\epsilon-u_0\|^2_{C([0,T];L^2(\Omega^p_\epsilon))}&\le 2\|u_\epsilon-\Phi_\zeta\|^2_{C([0,T];L^2(\Omega^p_\epsilon))}+2\|u_0-\Phi_\zeta\|^2_{C([0,T];L^2(\Omega^p_\epsilon))}\nonumber\\
   	&\le 2\|u_\epsilon-\Phi_\zeta\|^2_{C([0,T];L^2(\Omega_\epsilon^p))}+2\zeta.\label{eqn:UE}
   \end{align}
  Then we need to evaluate $\|u_\epsilon-\Phi_\zeta\|^2_{C([0,T];L^2(\Omega_\epsilon^p))}$. For that, we set
  \begin{align}\label{eqn:R}
  	\rho_\epsilon^\zeta(t)=\frac{1}{2}\|u_\epsilon(t)-\Phi_\zeta(t)\|^2_{\Omega_\epsilon^p}+\int_{0}^{t}\int_{\Omega_\epsilon^p}D_1(\nabla u_\epsilon-C^\epsilon\nabla\Phi_\zeta)(\tau,x)(\nabla u_\epsilon-C^\epsilon\nabla\Phi_\zeta)(\tau,x)dxd\tau.
  \end{align}
  By Lemma \ref{lemma:RC}, we obtain
  \begin{align*}
  	\|\rho^\zeta\|_{C[0,T]}=\limsup_{\epsilon\rightarrow0}\|\rho_\epsilon^\zeta\|_{C[0,T]}\ge\frac{1}{2}\limsup_{\epsilon\rightarrow0}\|u_\epsilon(t)-\Phi_\zeta(t)\|^2_{\Omega_\epsilon^p},
  \end{align*}
  where
  \begin{align*}
  	\rho^\zeta&=\frac{|Y^p|}{2}\|u_0(t)-\Phi_\zeta(t)\|_{\Omega}+\int_{0}^{t}\int_{\Omega}|Y^p|A(\nabla u_0-\nabla \Phi)(\tau,x)(\nabla u_0-\nabla \Phi)(\tau,x)dxd\tau.
  \end{align*}
  Next we apply \eqref{eqn:BE} and the fact that $A$ is bounded to derive the estimate
  \begin{align}\label{eqn:RB}
  	\|\rho^\zeta\|_{C[0,T]}\le 
  	\frac{\zeta|Y^p|}{2}+M|Y^p|\zeta=\zeta|Y^p|(M+\frac{1}{2}).
  \end{align}
 Therefore, \eqref{eqn:UE} takes the form
  \begin{align*}
  	\limsup_{\epsilon\rightarrow0}\|u_\epsilon-u_0\|^2_{C([0,T];L^2(\Omega_\epsilon^p))}\le 4\|\rho^\zeta\|_{C[0,T]}+2\zeta\le 2\zeta(1+|Y^p|(2M+1)).
  \end{align*}
  This implies $(i)$ of \eqref{eqn:MT} since $\zeta$ is arbitrary. We now write
  \begin{align*}
  \nabla u_\epsilon-C^\epsilon\nabla u_0=(\nabla u_\epsilon-C^\epsilon\nabla\Phi_\zeta)+C^\epsilon(\nabla \Phi_\zeta-\nabla u_0).
  \end{align*}
 Then due to the weak convergence of $C^\epsilon$ and \eqref{eqn:BE}, we have
 \begin{align}
 	\lim_{\epsilon\rightarrow 0}&\int_{0}^{T}\|\nabla u_\epsilon(t)-C^\epsilon\nabla u_0(t)\|^2_{\Omega_\epsilon^p}dt\nonumber\\
 	&\le2\limsup_{\epsilon\rightarrow0}\int_{0}^{T}\|\nabla u_\epsilon(t)-C^\epsilon\nabla\Phi_\zeta(t)\|^2_{\Omega_\epsilon^p}dt+2\limsup_{\epsilon\rightarrow0}\int_{0}^{T}\|C^\epsilon\|_{\Omega^p_\epsilon}\|\nabla\Phi_\zeta-\nabla u_0\|^2_{\Omega_\epsilon^p}dt\nonumber\\
 	&\le 2\limsup_{\epsilon\rightarrow0}\int_{0}^{T}\|\nabla u_\epsilon(t)-C^\epsilon\nabla\Phi_\zeta(t)\|^2_{\Omega_\epsilon^p}dt+2C_1\zeta.\label{eqn:GUB}
 \end{align}
 To find out the estimate for the integral form in the right hand side, we first rewritten \eqref{eqn:R} for $t=T$. Then application of the Lemma \ref{lemma:RC} and the inequality \eqref{eqn:RB} leads to
 \begin{align*}
 	\limsup_{\epsilon\rightarrow0}\int_{0}^{T}\|\nabla u_\epsilon(t)-C^\epsilon\nabla\Phi_\zeta(t)\|^2_{\Omega_\epsilon^p}dt\le\frac{1}{D_1}\lim_{\epsilon\rightarrow 0}\rho_\epsilon^\zeta(T)=\frac{1}{D_1}\rho^\zeta(T)=\frac{\zeta|Y^p|}{2D_1}(2M+1).
 \end{align*}
 After that, we substitute this in \eqref{eqn:GUB} and obtain $(ii)$ of \eqref{eqn:MT}. We can establish $(iii)$ and $(iv)$ of \eqref{eqn:MT} by taking the corresponding microscopic and macroscopic equations for the mobile species $M_2$ and following the same line of arguments. For the case of immobile species, we first subtract \eqref{eqn:MN1} from \eqref{eqn:M3S1}. Then multiplication by $(w_\epsilon(t)-w_0(t))$ and integration over $S\times\Gamma_\epsilon^*$ yields
 \begin{align*}
 	\int_{0}^{t}\frac{\partial}{\partial t}\|w_\epsilon(s)-w_0(s)\|^2_{\Gamma_\epsilon^*}ds=2k_d\epsilon\int_{0}^{t}\int_{\Gamma_\epsilon^*}(R(u_\epsilon,v_\epsilon)-R(u_0,v_0)-z_\epsilon+z_0)(w_\epsilon-w_0)d\sigma_xdt.
 \end{align*}
 Since $z_\epsilon,z_0$ is monotone with respect to $w_\epsilon, w_0$, so we can write
 \begin{align*}
 	(z_\epsilon-z_0)(w_\epsilon-w_0)\ge 0.
 \end{align*}
 Therefore, we obtain the inequality
 \begin{align*}
 	\|w_\epsilon(t)-w_0(t)\|^2_{\Gamma_\epsilon^*}\le \|w_{I\epsilon}-w_{I0}\|^2_{\Gamma_\epsilon^*}+k_d^2\|R(u_\epsilon,v_\epsilon)-R(u_0,v_0))\|^2_{\Gamma_\epsilon^*}+\int_{0}^{t}\|w_\epsilon(s)-w_0(s)\|^2_{\Gamma_\epsilon^*}ds.
 \end{align*}
 We now use Lemma $6.3$ of \cite{ghoshmahato} and Gronwall's inequality to get
 \begin{align*}
 	\|w_\epsilon(t)-w_0(t)\|^2_{\Gamma_\epsilon^*}\le \|w_{I\epsilon}-w_{I0}\|^2_{\Gamma_\epsilon^*} e^T.
 \end{align*}
 Consequently, \eqref{eqn:C2} gives the desired convergence $(v)$. This concludes the proof.
 \end{proof}

  \section{Numerical simulation}
  \textbf{The physics setting:} In this section we compare the numerical solutions of the microscopic equations $\eqref{eqn:M11}-\eqref{eqn:MN3}$ with the numerical solutions of the macroscopic equations $\eqref{eqn:M1s1}-\eqref{eqn:M3S3}$ in order to see how well the homogenized equations approximate the averaged behavior of the original model. To achieve this goal, we start with the domain as $\Omega:=[0,1.2]\times[0,1]$ in $\mathbb{R}^2$. The unit representative cell is denoted by $Y=[0,1]\times[0,1]\subset \mathbb{R}^2$ which consist the solid part $Y^s=B((0.5,0.5),0.25)$. We choose the scaling parameter $\epsilon=0.2$. We perform numerical experiments by using COMSOL\cite{comsol}. Further, two mobile species $M_1$ and $M_2$ are present in $\Omega_\epsilon^p$ and one immobile species $M_{12}$ is present on the interface $\Gamma_\epsilon^*$. They are connected via the reversible reaction 
  \begin{equation}\label{eqn:RE}
  	M_1+M_2\leftrightarrow M_{12} \qquad \text{ on }\quad \Gamma_\varepsilon^*,
  \end{equation}
   where the reaction rate term is given by \eqref{eqn:RR}.
   \subsection{Simulation of the micromodel}
   Let the molar concentrations of $M_1, M_2$ and $M_{12}$ are given by  $u_\epsilon, v_\epsilon$ and $w_\epsilon$, respectively. We choose the parameter values and the regularization parameter as
   \begin{align}\label{eqn:PV}
   	D_1=1, D_2=2, k_f=1.8, k_d=2.2, k_1=1, k_2=1, \delta=0.01
   \end{align}
   and for the initial conditions we use $u_\epsilon(0,x,y)=5(x+y), v_\epsilon(0,x)=8x+2y$ and $w_\epsilon(0,x)=3x+y$. We choose ``Normal'' mesh available in COMSOL to discretize the domain $\Omega_\epsilon^p$. We solve the system for $t=20s$.
   \begin{figure}[H]
   	\centering
   	\subfloat[\centering Concentration of $M_1$ for $t=5s$]{{\includegraphics[width=7cm]{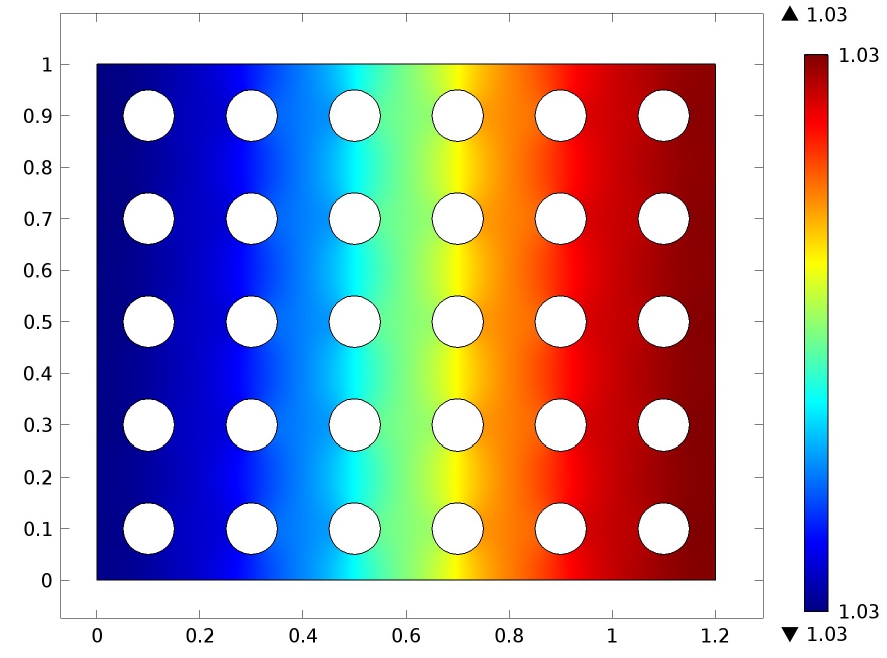} }}
   	\qquad
   	\subfloat[\centering  Concentration of $M_1$ for $t=10s$]{{\includegraphics[width=7cm]{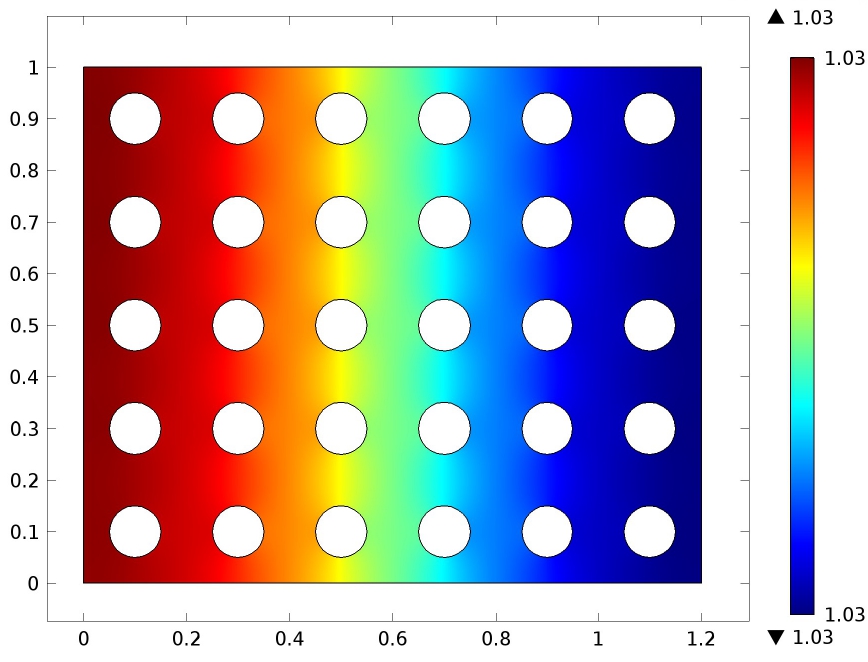} }}\\
   	\centering
   	\subfloat[\centering Concentration of $M_1$ for $t=15s$]{{\includegraphics[width=7cm]{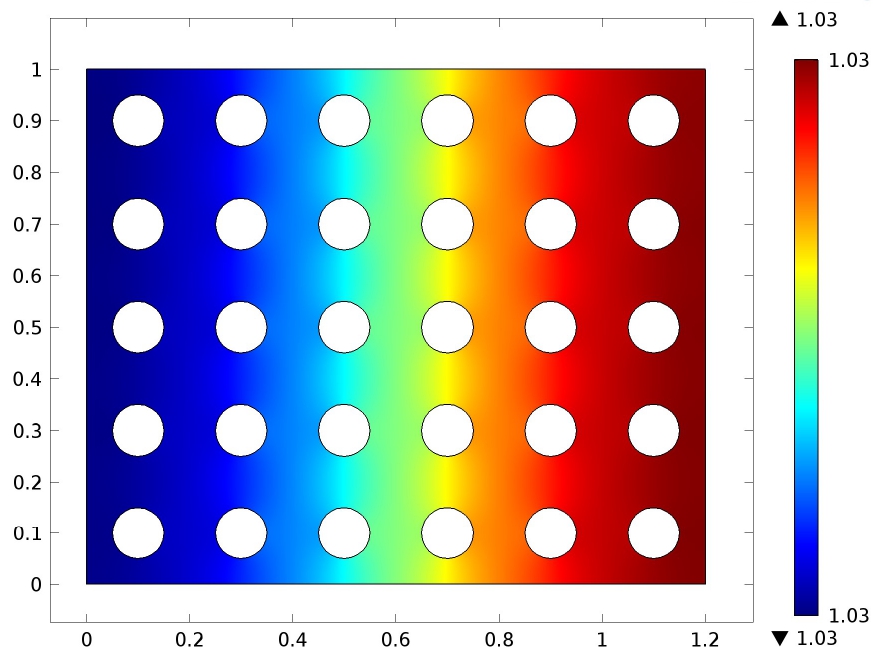} }}
   	\qquad
   	\subfloat[\centering  Concentration of $M_1$ for $t=20s$]{{\includegraphics[width=7cm]{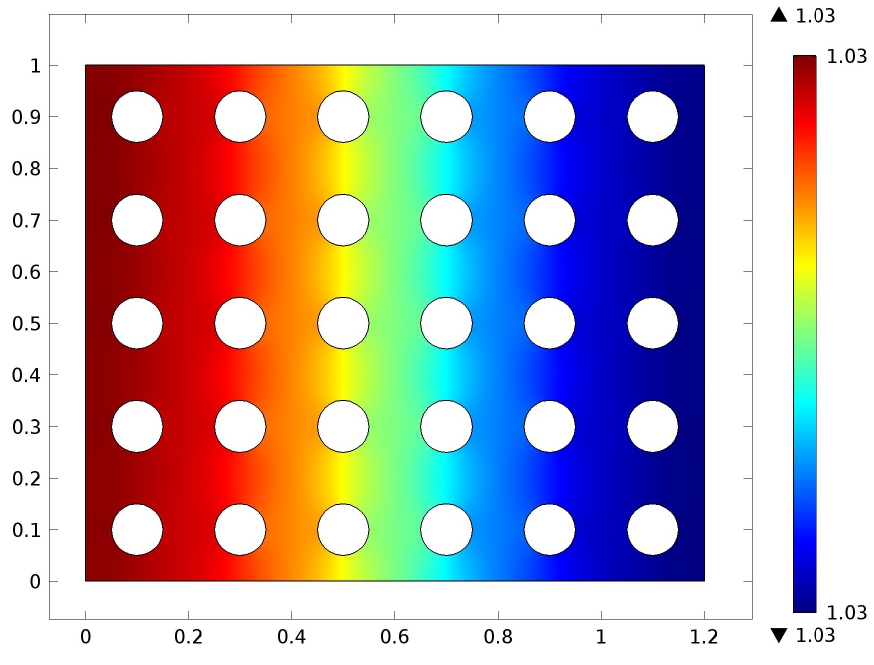} }}
   	\caption{Concentration of the first mobile species $M_1$ in $\Omega_\epsilon^p$ for different time.}
   \end{figure}
   \begin{figure}[h!]
   	\begin{center}
   		\includegraphics[width=10cm, height=6.5cm]{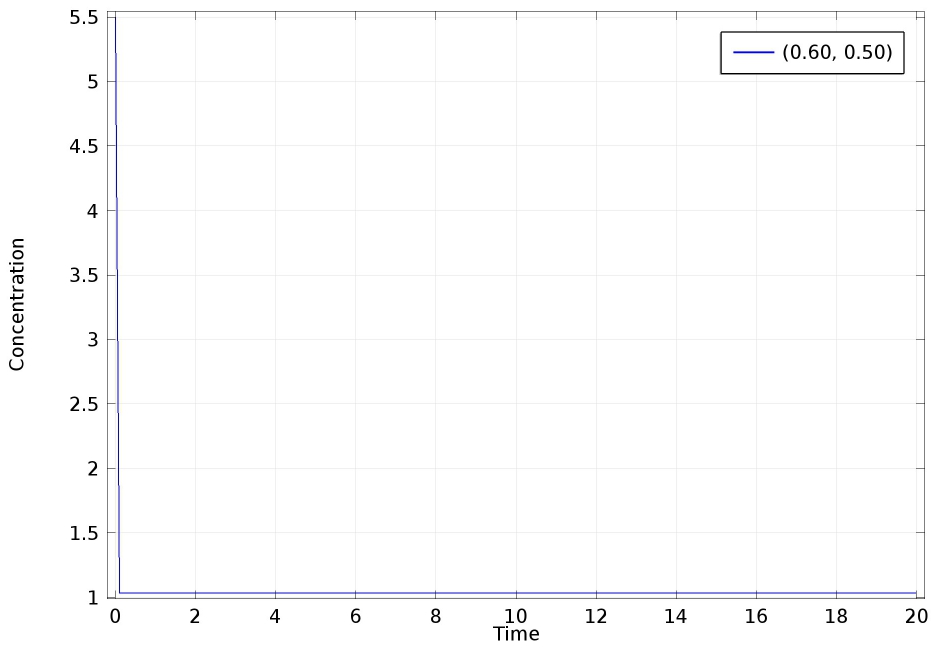}
   		\caption{Concentration of $M_1$ at the point $(0.6,0.5)$ in $\Omega_\epsilon^p$ in $20s$.}
   	\end{center}
   \end{figure}
   We notice that the time taken by the solver is $10s$. The concentration of $M_1$ for $t=5s, t=10s, t=15s$ and $t=20s$ is depicted in Figure $2$. We also plotted the change of concentration of $M_1$ at $(0.6,0.5)$ for $20s$ in Figure $3.$ We can see that there is a jump in concentration at $t=0s$ and it attains the value $5.5$. Whereas at $t=0.1s$ it became $1.03$ so the reaction tries to stabilize it. 
  \begin{figure}[H]
  	\centering
  	\subfloat[\centering Concentration of $M_2$ for $t=5s$]{{\includegraphics[width=7cm]{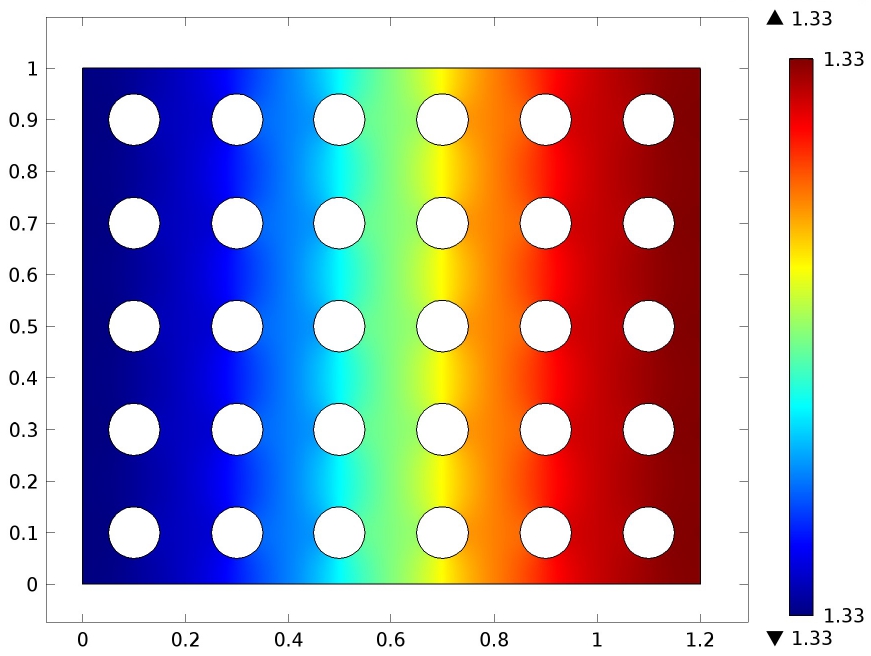} }}
  	\qquad
  	\subfloat[\centering  Concentration of $M_2$ for $t=10s$]{{\includegraphics[width=7cm]{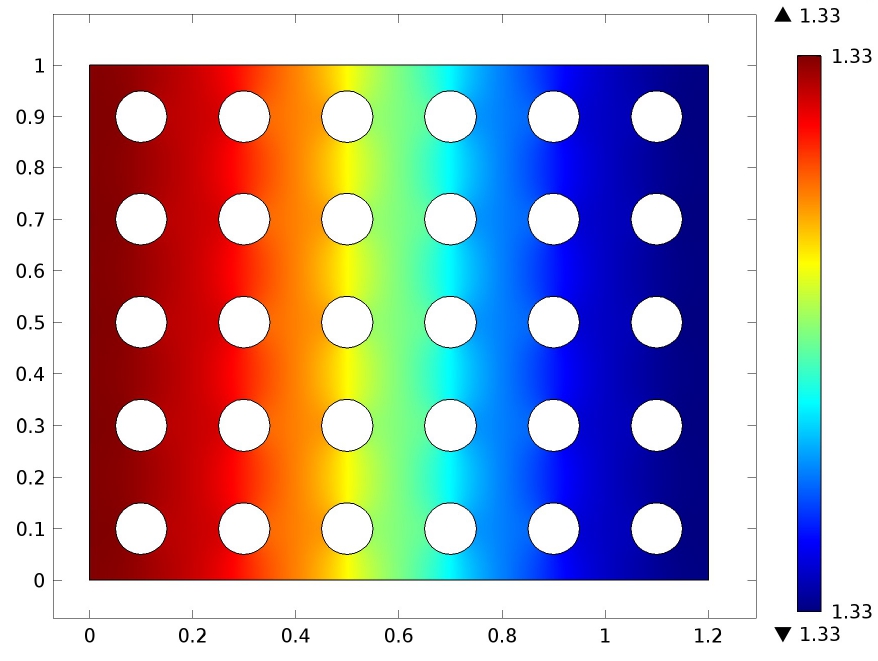} }}\\
  	\centering
  	\subfloat[\centering Concentration of $M_2$ for $t=15s$]{{\includegraphics[width=7cm]{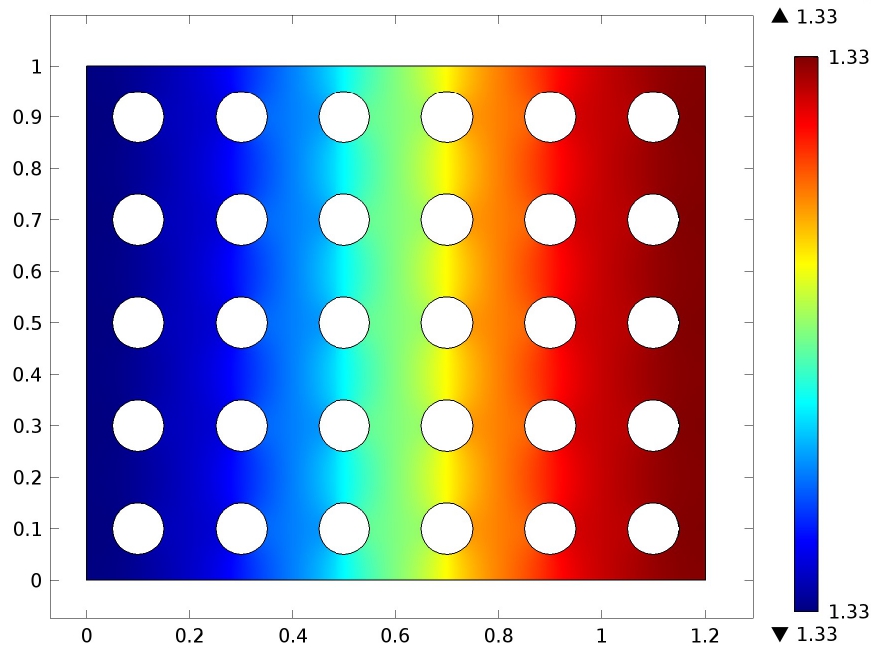} }}
  	\qquad
  	\subfloat[\centering  Concentration of $M_2$ for $t=20s$]{{\includegraphics[width=7cm]{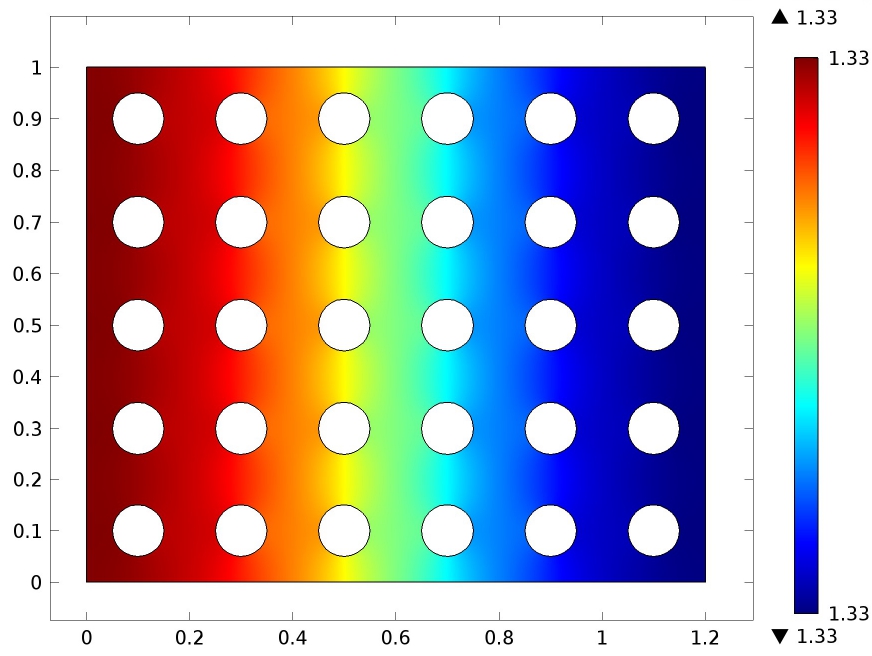} }}
  	\caption{Concentration of the second mobile species $M_2$ in $\Omega_\epsilon^p$ for different time.}
  \end{figure}
  \begin{figure}[h!]
  	\begin{center}
  		\includegraphics[width=10cm, height=6.5cm]{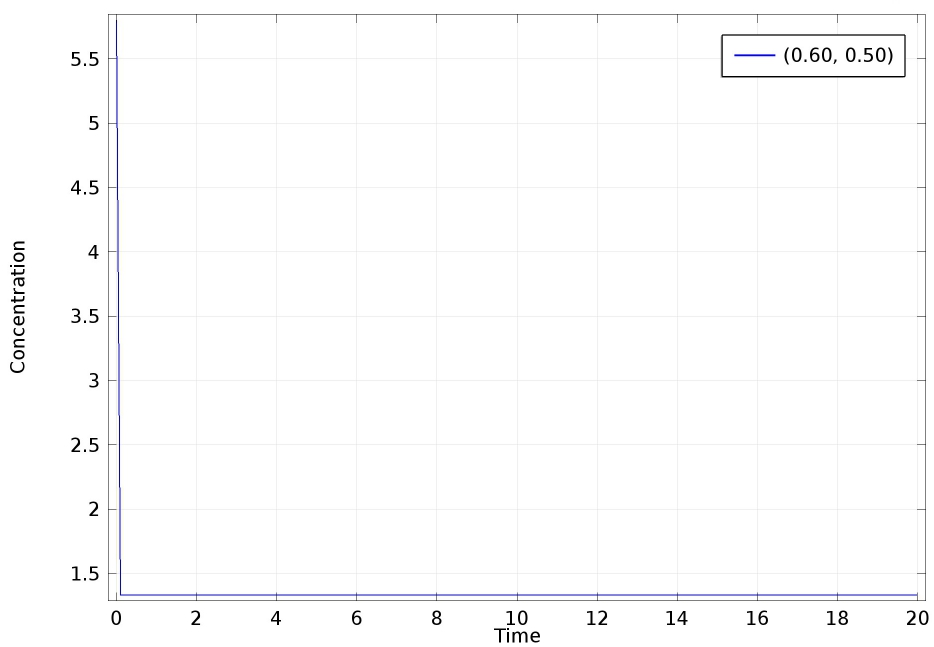}
  		\caption{Concentration of $M_2$ at the point $(0.6,0.5)$ in $\Omega_\epsilon^p$ in $20s$.}
  	\end{center}
  \end{figure}
  Similarly, the molar concentrations of $M_2$ for different time is plotted in Figure $4$ and the change of concentration can be seen in the Figure $5$. Just like $M_1$, here is also a jump in the concentration at $t=0s$ and the concentration became $5.8$. While at $t=0.1s$ it takes the value $1.33$. 
   \subsection{Solution of the Cell problems}
   In order to simulate the upscaled equations, we need to evaluate the effective diffusion tensors for the two mobile species $M_1$ and $M_2$. We commence by solving the cell problems \eqref{eqn:CP} and the solutions is shown in Figure $6$ for $j=1, 2$.
   \begin{figure}[H]
   	\centering
   	\subfloat[\centering For $j=1$]{{\includegraphics[width=7cm]{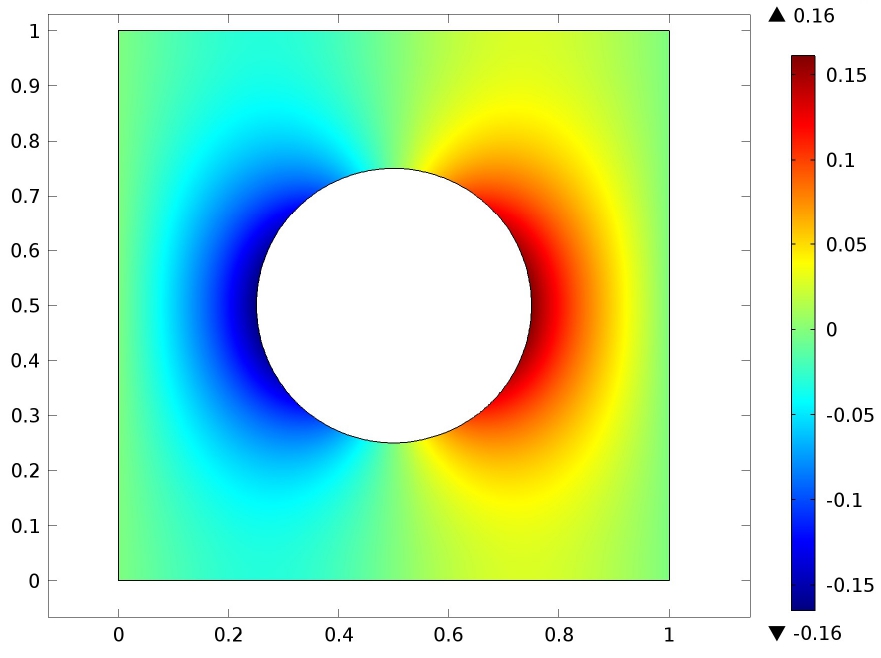} }}
   	\qquad
   	\subfloat[\centering  For $j=2$]{{\includegraphics[width=7cm]{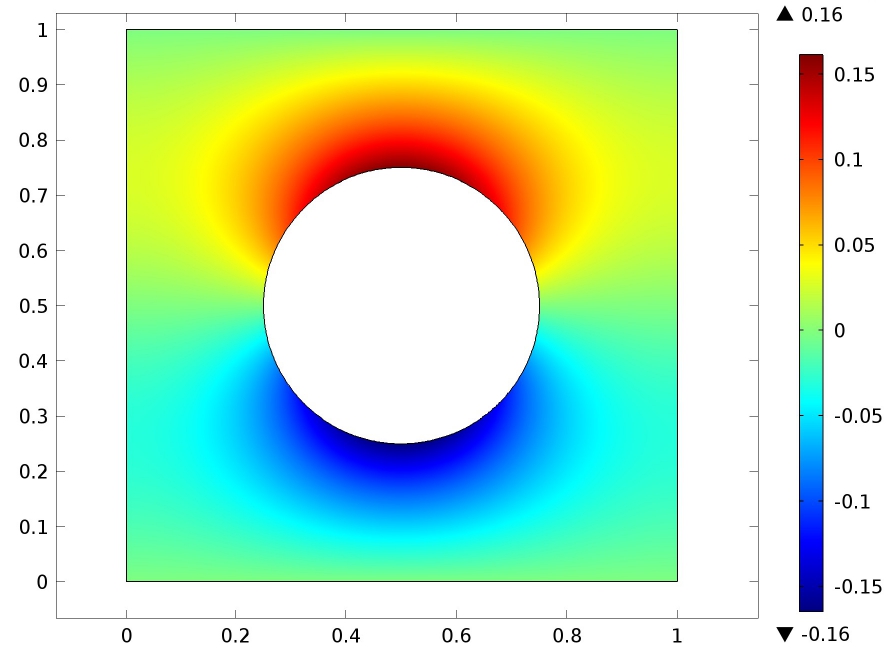} }}
   	\caption{Solution of the cell problems}
   \end{figure}
   We compute the effective tensors with the help of ``Derived values'' feature available in COMSOL. Thus we obtain
   \begin{align}
   	A=(a_{ij})_{1\le i,j\le 2}=\begin{bmatrix}
   		0.8358 & -2.91316\times 10^{-12} \\
   		-2.91334\times 10^{-12} & 0.8358
   	\end{bmatrix},\label{eqn:a}\\
   B=(b_{ij})_{1\le i,j\le 2}=\begin{bmatrix}
   	1.67161 & -5.82632\times 10^{-12} \\
   	-5.82667\times 10^{-12} & 1.67161
   \end{bmatrix}.\label{eqn:b}
   \end{align}
   \subsection{Simulation of the macromodel}
   We employ the idea of \cite{van2009crystal} for the simulation of the homogenized equations. We kept the same parameter values and the regularized parameter used for the micromodel as in \eqref{eqn:PV}. The effective homogenized matrices $A$ and $B$ are given by \eqref{eqn:a} and \eqref{eqn:b}. Again we choose the ``Normal'' mesh to discretize the domain $\Omega$ and solve the system for $t=20s$. 
   \begin{figure}[H]
   	\centering
   	\subfloat[\centering Concentration of $M_1$ for $t=5s$]{{\includegraphics[width=7cm]{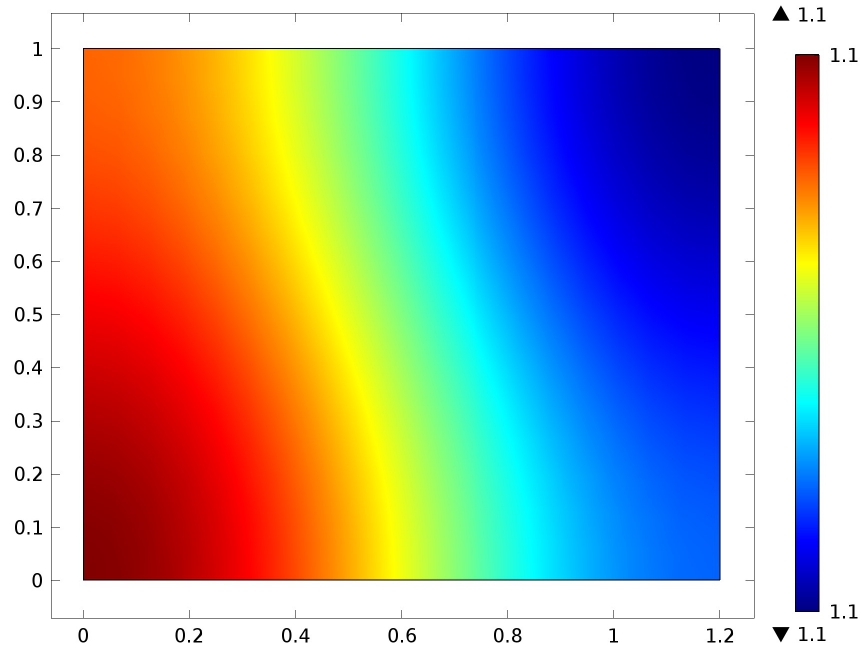} }}
   	\qquad
   	\subfloat[\centering  Concentration of $M_1$ for $t=10s$]{{\includegraphics[width=7cm]{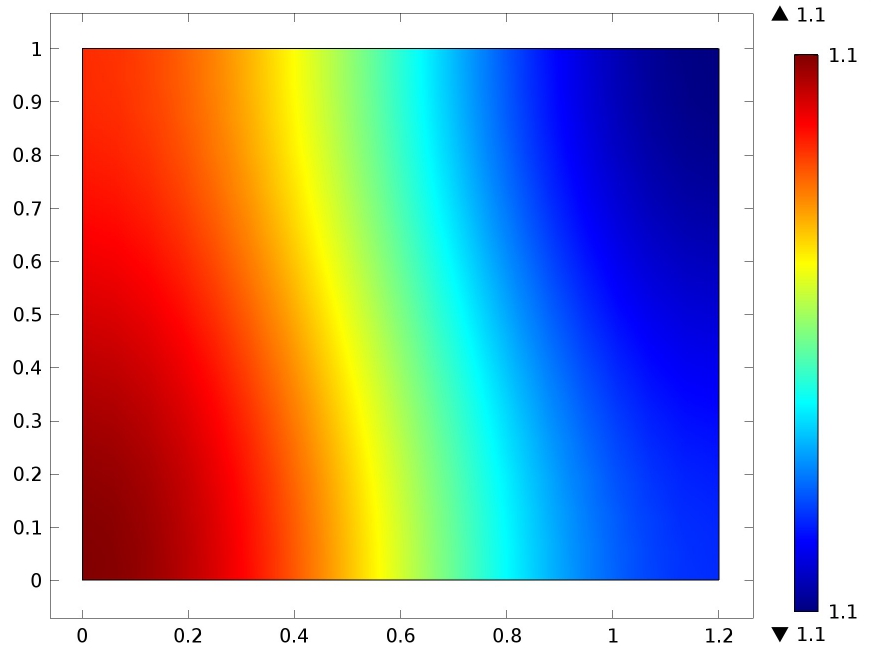} }}\\
   	\centering
   	\subfloat[\centering Concentration of $M_1$ for $t=15s$]{{\includegraphics[width=7cm]{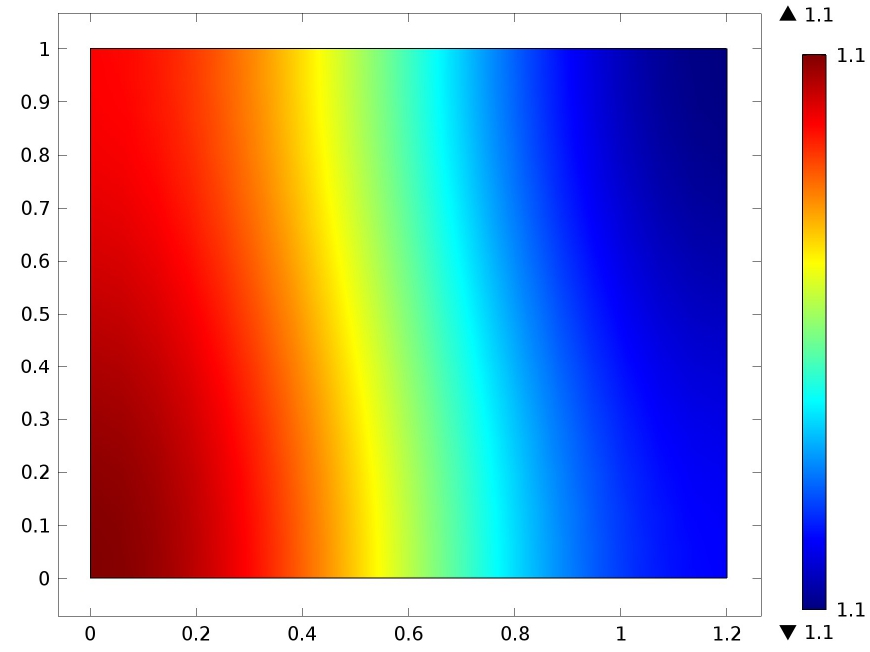} }}
   	\qquad
   	\subfloat[\centering  Concentration of $M_1$ for $t=20s$]{{\includegraphics[width=7cm]{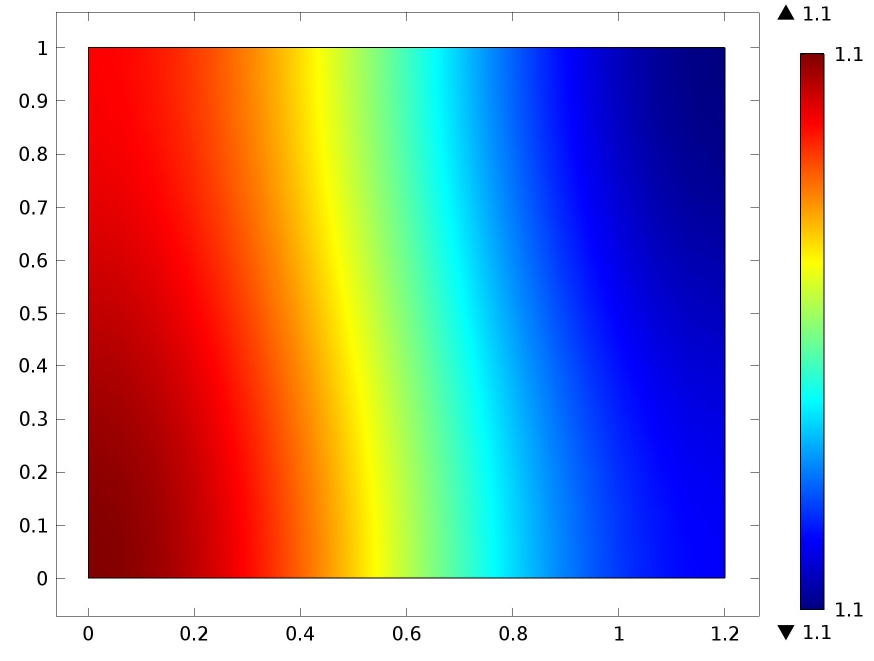} }}
   	\caption{Concentration of the first mobile species $M_1$ in $\Omega$ for different time.}
   \end{figure}
   \begin{figure}[h!]
   	\begin{center}
   		\includegraphics[width=10cm, height=6.5cm]{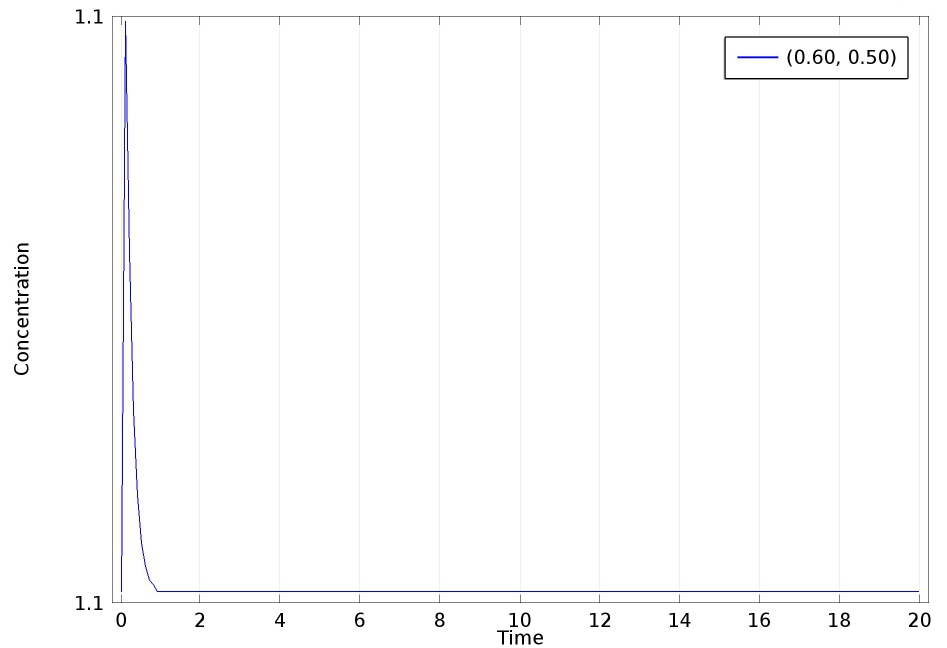}
   		\caption{Concentration of $M_1$ at the point $(0.6,0.5)$ in $\Omega$ in $20s$.}
   	\end{center}
   \end{figure}
    \begin{figure}[H]
   	\centering
   	\subfloat[\centering Concentration of $M_2$ for $t=5s$]{{\includegraphics[width=7cm]{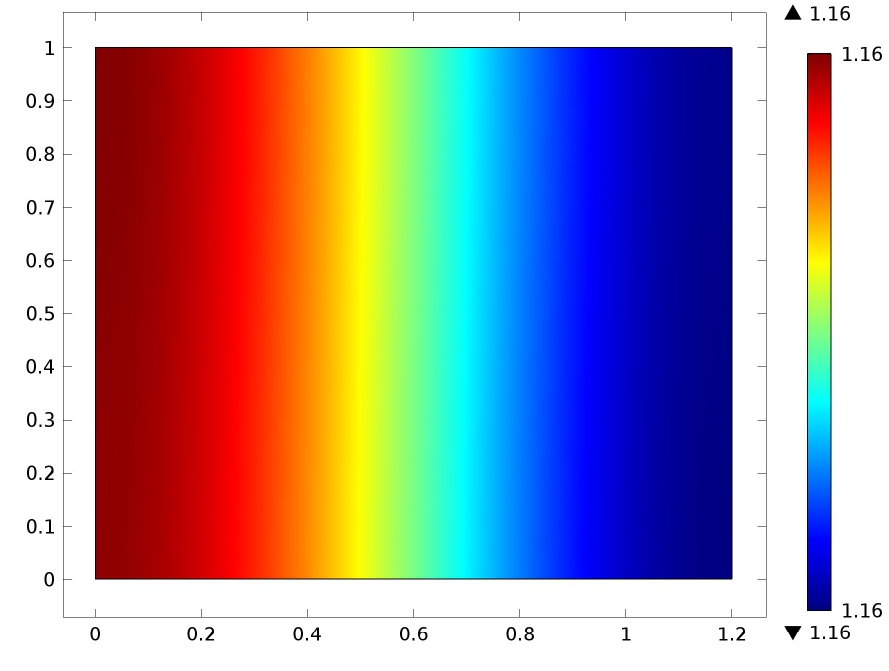} }}
   	\qquad
   	\subfloat[\centering  Concentration of $M_2$ for $t=10s$]{{\includegraphics[width=7cm]{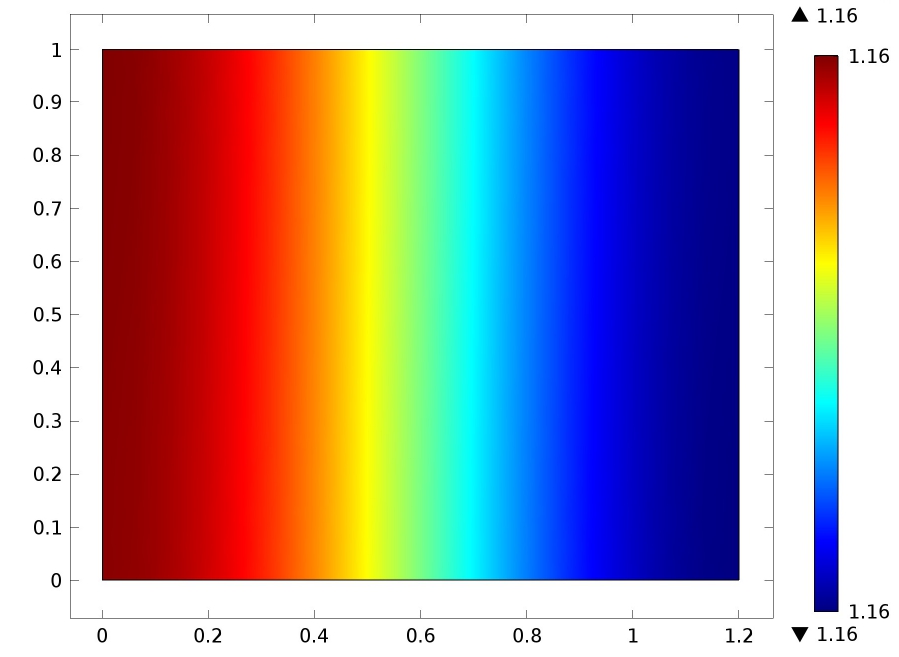} }}\\
   	\centering
   	\subfloat[\centering Concentration of $M_2$ for $t=15s$]{{\includegraphics[width=7cm]{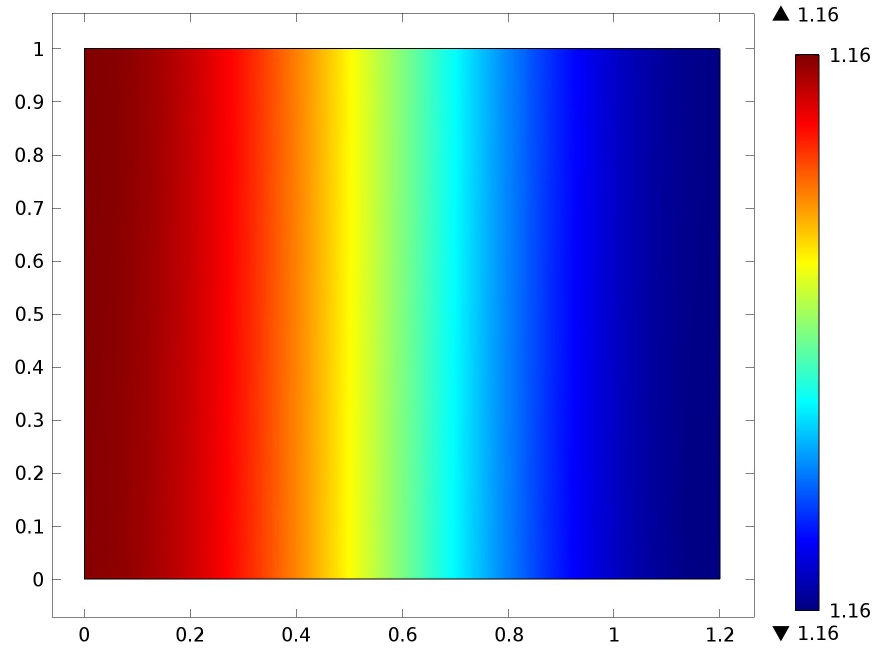} }}
   	\qquad
   	\subfloat[\centering  Concentration of $M_2$ for $t=20s$]{{\includegraphics[width=7cm]{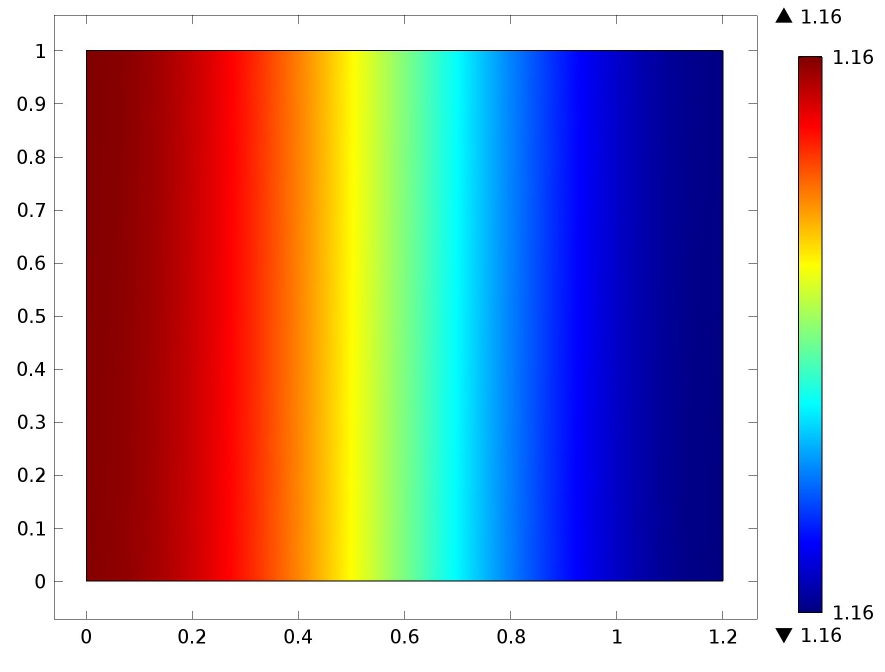} }}
   	\caption{Concentration of the second mobile species $M_2$ in $\Omega$ for different time.}
   \end{figure}
   \begin{figure}[h!]
   	\begin{center}
   		\includegraphics[width=10cm, height=6.5cm]{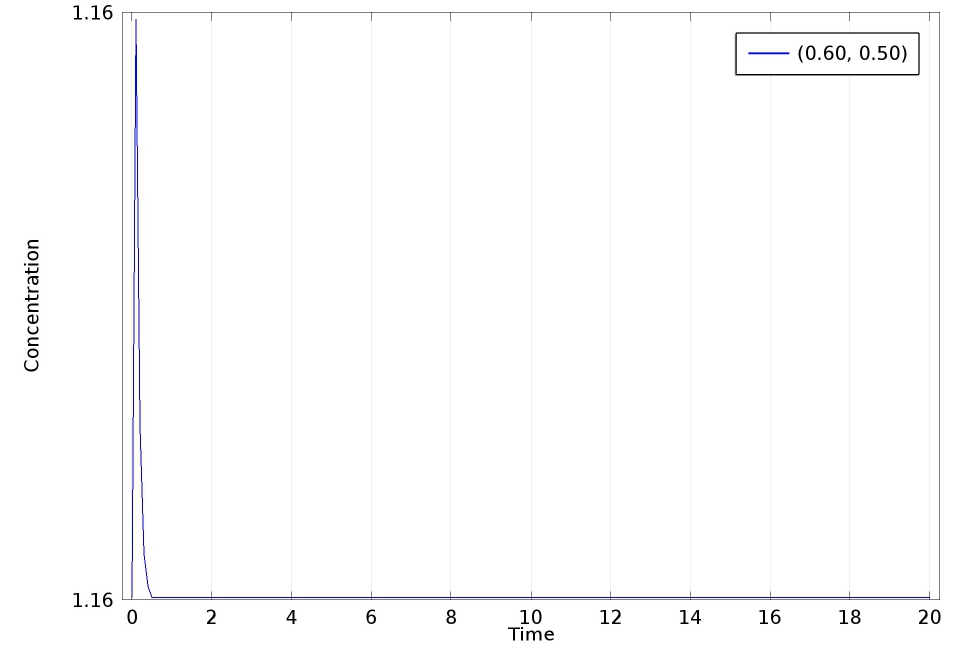}
   		\caption{Concentration of $M_2$ at the point $(0.6,0.5)$ in $\Omega$ in $20s$.}
   	\end{center}
   \end{figure}
   In this case, the time taken by the solver is $1s$. We repeat the similar computation for $M_1$ and $M_2$ in the macro model. The results are shown in Figure  $7-10$. However, the noticeable points are as follows: $(i)$ We use the same mesh to simulate the micro and macro system. Although the solver takes very little time to solve the macro problem in comparison to the micro problem. Hence the upscaled model is computationally efficient, $(ii)$ There is no such jump in concentration in the graph Figure $8$ and Figure $10$. By comparing Figure $2$ and $7$, we can conclude that the solutions of the homogenized equations agree very well with the solutions of the original micro-scale model. It can also be understandable by comparing Figure $3$ and $8$. Therefore the upscaled equations describe the behavior of the microscale model very well. Thus, homogenization proved to be an efficient tool to deal with the problems arising from the microscopically heterogeneous medium.  
  \section{Conclusion}
  We study crystal dissolution and precipitation in the context of a porous medium. The model takes care of the accumulation of the immobile species on the grain boundary. Using the homogenization technique, we derive the macroscopic model. In this article, we wish to understand the error caused by replacing a heterogeneous solution with a homogenization one together with numerical experiments. We observe that the macro model is advantageous for numerical simulations. Since it takes less time compared to the micromodel, it will reduce the computational cost for real-world applications. Furthermore, the numerical simulation for a test problem shows that the solution of the homogenized equation approximates the solution of the microscopic model very well. In this way, we validate the homogenization procedure and establish that it's an efficient tool to deal with such heterogeneous problems.
	\bibliographystyle{acm}
	\bibliography{sfw}
\end{document}